\documentclass{article}

\usepackage{arxiv}

\usepackage[utf8]{inputenc} 
\usepackage[T1]{fontenc}    
\usepackage{url}            
\usepackage{booktabs}       
\usepackage{amsfonts}       
\usepackage{nicefrac}       
\usepackage{microtype}      
\usepackage{lipsum}

\usepackage[unicode]{hyperref}

\usepackage{color}
\usepackage{latexsym}
\usepackage{indentfirst}
\usepackage{amsxtra}
\usepackage{amssymb}
\usepackage{amsthm}
\usepackage{amsmath}
\usepackage{mathrsfs} 

\usepackage[capitalise]{cleveref}

\newtheorem{theor}{Theorem}
\newtheorem*{theor*}{Theorem}
\newtheorem{prop}[theor]{Proposition}

\newtheorem*{cor*}{Corollary}
\theoremstyle{definition}               
\newtheorem{defin}[theor]{Definition}
\newtheorem{ex}[theor]{Example}
\newtheorem{exs}[theor]{Examples}
\newtheorem{rem}[theor]{Remark}




\DeclareMathOperator{\id}{id}
\DeclareMathOperator{\im}{Im}

\DeclareMathOperator{\M}{M}
\DeclareMathOperator{\lcm}{lcm}
\DeclareMathOperator{\ind}{i}
\DeclareMathOperator{\per}{p}


\newcommand{\s}[1]{S_{#1}}
\newcommand{\phii}[2]{\phi_{#1,#2}}

\newcommand{\X}[1]{X_{#1}}

\newcommand{\lambdaa}[2]{\lambda_{#1}{#2}}

\newcommand{\rhoo}[2]{\rho_{#1}{#2}}

\newcommand{\lambdai}[3]{\lambda^{#1}_{#2}{#3}}
\newcommand{\rhoi}[3]{\rho^{#1}_{#2}{#3}}
\newcommand{\indd}[1]{\ind{#1}}
\newcommand{\perr}[1]{\per{#1}}

\title{Set-theoretic solutions to the Yang-Baxter equation and generalized semi-braces}

\author{
  Francesco~Catino
  \\
  Dipartimento di Matematica e Fisica "Ennio De Giorgi"\\
  Università del Salento\\
    Via Provinciale Lecce-Arnesano\\
    73100 Lecce (Italy)\\
  \texttt{franceso.catino@unisalento.it} \\
   \And
 Ilaria ~Colazzo \\
 Department of Mathematics\\
    Vrije Universiteit Brussel, Pleinlaan 2\\
    1050 Brussel (Belgium)\\
  \texttt{ilaria.colazzo@vub.be} \\
  \And
 Paola ~Stefanelli \\
  Dipartimento di Matematica e Fisica "Ennio De Giorgi"\\
  Università del Salento\\
    Via Provinciale Lecce-Arnesano\\
    73100 Lecce (Italy)\\
  \texttt{paola.stefanelli@unisalento.it} \\
}
\usepackage{setspace}

\begin{document}
\maketitle

\begin{abstract}
This paper aims to introduce a construction technique of set-theoretic solutions of the Yang-Baxter equation, called \emph{strong semilattice of solutions}. This technique, inspired by the strong semilattice of semigroups, allows one to obtain new solutions. In particular, this method turns out to be useful to provide non-bijective solutions of finite order. It is well-known braces, skew braces and semi-braces are closely linked with solutions. Hence, we introduce a generalization of the algebraic structure of semi-braces based on this new construction technique of solutions.
\end{abstract}
\keywords{Quantum Yang-Baxter equation  \and set-theoretical solution \and brace \and skew brace \and semi-brace \and generalized semi-brace\\
\textbf{MSC 2020} \quad 16T25 \and 81R50\and 16Y99 \and 16N20}

\doublespacing

\section{Introduction}
The quantum Yang-Baxter equation appeared in the work of Yang \cite{Ya67} and Baxter \cite{Ba72}. It is one of the basic equations in mathematical physics, and it laid the foundations of the theory of quantum groups \cite{Ka95}. Solutions of the Yang-Baxter equation are instrumental in the construction of semisimple Hopf algebras \cite{EtGe98, Ra12} and provide examples of colouring invariants in knot theory \cite{P06Y}.  More recently, the Yang-Baxter solution popped up in the theory of quantum computation \cite{kauffman2004braiding, zhang2005universal}, where solutions of the Yang-Baxter equation provide so-called universal gates.
One of the central open problems is to find all solutions of the Yang-Baxter equation. 
Let $V$ be a vector space over a field $K$. Then, a solution of the Yang-Baxter equation is a linear map $R: V \otimes V \longrightarrow V \otimes V$, for which the following holds on $V^{\otimes 3}$: $(R \otimes \id_V) (\id_V \otimes R) (R \otimes \id_V) = (\id_V \otimes R) (R \otimes \id_V) (\id_V \otimes R)$.
The simplest solutions are the solutions $R$ induced by a linear extension of a mapping $r: X \times X \longrightarrow X \times X$, where $X$ is a basis for $V$, satisfying the set-theoretic version of the Yang-Baxter equation, i.e., satisfying the following on $X^3$:
\begin{align*}
    (r \times \id_X) (\id_X \times r) (r \times \id_X) = (\id_X \times r) (r \times \id_X) (\id_X \times r).
\end{align*}
In this case, $r$ is said to be a \emph{set-theoretic solution of the Yang–Baxter equation} (briefly a \emph{solution}). Drinfel'd, in \cite{Dr90}, posed the question of finding these set-theoretic solutions.
Denote for $x,y \in X$, the element $r(x,y) = (\lambda_x(y),\rho_y(x))$. One says that a set-theoretic solution $r$ is \emph{left non-degenerate} if $\lambda_x$ is bijective, for every $x\in S$, \emph{right non-degenerate} if $\rho_y$ is bijective, for every $y\in S$, and \emph{non-degenerate} if $r$ is both left and right non-degenerate.
If a solution is neither left nor right non-degenerate, then it is called \emph{degenerate}.
The first papers on set-theoretic solutions are those of Etingof, Schedler and Soloviev \cite{ESS99} and Gateva–Ivanova and Van den Bergh \cite{GaB98}. Both papers considered \emph{involutive} solutions, i.e.,  solutions $r$ where $r^2= \id$. Rump \cite{Ru07a} introduced a new algebraic structure, braces, that generalizes radical rings and provides an algebraic framework. We provide the equivalent definition formulated by Ced\'o, Jespers and Okni\'nski \cite{CeJO14}. A triple $(B,+,\circ)$ is called a \emph{left brace} if $(B,+)$ is an abelian group and $(B,\circ)$ is a group, such that for any $a,b,c \in B$ it holds that 
\begin{align}\label{eq:brace}\tag{$\diamond$}
a \circ (b+c) = a\circ b -a + a \circ c.
\end{align}
This new structure showed connections between the Yang-Baxter equation and ring theory, flat manifolds, orderability of groups, Garside theory, regular subgroups of the affine group, see for instance \cite{CCoSt15, CCoSt16, Ch16, ChGo14, De15, Ga12, Sm18}.
Lu, Yan, and Zhu \cite{LuYZ00} and Soloviev \cite{So00} started the study of non-degenerate bijective solutions, not necessarily involutive.
Almost all of the ideas used in the theory of involutive solutions can be transported to non-involutive solutions. The algebraic framework now is provided by skew left braces \cite{GVe17}. Let $(B,+)$ and $(B,\circ)$ be groups on the same set $B$. If, for any $a,b,c \in B$, condition \eqref{eq:brace} holds, the triple $(B,+,\circ)$ is called a \emph{skew left brace}.
Skew left braces and some of their applications are intensively studied,  see for instance \cite{CCoSt19, Ch18, DeC19, KoTr20, Ru19}.

In \cite{Le17}, Lebed drew the attention on idempotent solutions. Indeed, using idempotent solutions and graphical calculus from knot theory, she provides a unifying tool to deal with several diverse algebraic structures, such as free monoids, free commutative monoids, factorizable monoids, plactic monoids and distributive lattice. Examples and classifications of these solutions have been provided by Matsumoto and Shimizu \cite{MaShi18} and by Stanovsk{\`y} and Vojt{\v{e}}chovsk{\`y} \cite{StVo20x}. Moreover, Cvetko-Vah and Verwimp, \cite{CvVer20} provided cubic solutions with skew lattices. A cubic solution $r$ is a solution such that $r^3=r$; hence, this class includes both involutive and idempotent solutions.
More in general, a more systematic approach to the study of solutions with finite order
can be found recently in \cite{CCoSt20, CCoSt20-2,CaMaSt20x}.
Catino, Colazzo, and Stefanelli \cite{CCoSt17} and Jespers and Van Antwerpen \cite{JVA19} introduced the algebraic structure called \emph{left semi-brace} to deal with solutions that are not necessarily non-degenerate or that are idempotent.
Let $(B,+)$ be a semigroup and $(B,\circ)$ be a group. Then $(B,+,\circ)$ is called a left semi-brace if, for any $a,b,c \in B$, it holds that 
\begin{align*}
    a \circ (b+c) = a \circ b + a \circ (a^- +c),
\end{align*}
where $a^-$ denotes the inverse $a$ in $(B,\circ)$. If $(B,+)$ is a left cancellative semigroup, then we call $(B,+,\circ)$ a left cancellative left semi-brace. This was the original definition by Catino, Colazzo and Stefanelli \cite{CCoSt17}. It has been shown that left semi-braces, under some mild assumption, provide set-theoretic solutions of the Yang-Baxter equation. Moreover, the associated solution is left non-degenerate if and only if the left semi-brace is left cancellative. 

Out of algebraic interest Brzezi{\'n}ski introduced left trusses \cite{Br19} and left semi-trusses \cite{Br18}. A quadruple $(B,+,\circ,\lambda)$ is called a \emph{left semi-truss} if both $(B,+)$ and $(B,\circ)$ are semigroups and $\lambda:B \times B \rightarrow B$ is a map such that $a \circ (b+c) = (a \circ b) + \lambda(a,c)$.
Clearly, the class of left semi-trusses contains all left semi-braces, rings, associative algebras and distributive lattices. This  entails  that  it  will  prove  difficult  to  present  deep  results  on  this  class.   However,  one  may examine  large subclasses.   In  particular,  Brzezi{\'n}ski  \cite{Br19} focused  on left semi-trusses with  $(B,+)$ a left cancellative semigroup and $(B,\circ)$ a group, and showed that such a left semi-truss is equivalent with a left cancellative semi-brace, thus providing set-theoretic solutions of the Yang-Baxter, albeit known ones.
In \cite{Mi18} Miccoli introduced almost left semi-braces, a particular instance of left semi-trusses, and constructed set-theoretic solutions associated with this algebraic structure. 
In \cite{CVA20x}, Colazzo and Van Antwerpen continue this study focusing on the subclass of brace-like left semi-trusses, i.e., left semi-trusses in which the multiplicative semigroup is a group and which includes almost left semi-braces. Concerning solutions, they showed that the solution one can associate with an almost left semi-brace is already the associated solution of a left semi-brace. In particular, this shows that brace-like left semi-trusses will not yield a universal algebraic structure that produces set-theoretic solutions. 

In this paper, we focus on a new algebraic structure that includes left semi-braces and is an instance of left semi-trusses, which is on a different path with respect to brace-like left semi-trusses, called generalized left semi-brace. A triple $(S,+,\circ)$ is called a \emph{generalized left semi-brace} if $\left(S,+\right)$ is a semigroup, $\left(S,\circ\right)$ is a completely regular semigroup (or union of groups),
and such that, for any $a,b,c\in S$. it holds
\begin{align*}
a \circ \left(b + c\right) = a \circ b + a\circ\left(a^- + c\right),
\end{align*}
where $a^-$ denotes the (group) inverse of $a$ in $\left(B,\circ\right)$. 
We prove that under some mild assumptions, generalized left semi-braces provide solutions. In particular, elementary examples of generalized left semi-braces produce cubic solutions that cannot be obtained by skew lattices and left semi-brace. Also, we introduce a construction technique that provides generalized left semi-braces called the strong semilattice of generalized left semi-braces. This technique is inspired by the description of semigroups which are unions of groups due to Clifford \cite{Cl41}. 

Furthermore, we introduce a construction technique for solutions called the strong semilattice of solutions. This technique takes a family of disjoint sets $\left\{X_{\alpha}\ |\ \alpha \in Y\right\}$ indexed by a semilattice $Y$ and solutions defined on these sets, then, under some assumptions of compatibility, it allows one to construct a solution on the union of the sets $X_{\alpha}$. We prove that the solutions provide by the strong semilattice of left semi-braces are a particular instance of strong semilattice of solutions. 

Finally, we prove that the strong semilattice of solutions is a useful tool to provide solutions of finite order. Indeed the strong semilattice of solutions of finite order is a solution of finite order. Moreover, a solution $r$ is of finite order if there exist a non-negative integer $i$ and a positive integer $p$ such that $r^{p+i}=r^i$ and the minimal integers that satisfy such relation are said to be index and period respectively. We show that it is possible to determine the index and the period of the semilattice of solutions $\left\{r_{\alpha}\ |\ \alpha \in Y\ \right\}$ as a function of the indexes and periods of $r_{\alpha}$. As a corollary of this result, we prove that solutions associated with strong semilattices of left semi-braces are not bijective, so they are clearly different from solutions obtained by left semi-braces.

\section{Basic tools on left semi-braces}

Let us briefly present some basic background information regarding left semi-braces. Most of the content of this section appear in \cite{JVA19}. In particular, we provide different proof of \cite[Corollary2.9]{JVA19} based on a result in semigroup theory due to Hickey \cite{Hi83} that gives a clear description of completely regular semigroup with middle units. Moreover, we add further information on the behavior of middle units of the additive semigroup of a left semi-brace. Finally, we present concrete examples of left semi-braces.

Let's start by recalling the definition of left semi-braces. 

\begin{defin}
	Let $B$ be a set with two operations $+$ and $\circ$ such that $\left(B,+\right)$ is a semigroup and $\left(B,\circ\right)$ is a group. Then, $\left(B, +, \circ \right)$ is said to be a \emph{left semi-brace} if
	\begin{align*}
		a \circ \left(b+c\right) = a \circ b + a\circ\left(a^- +c\right),
	\end{align*}
for all $a, b, c \in B$, where $a^-$ is the inverse of $a$ in $\left(B, \circ \right)$. 
\end{defin}
\noindent Throughout, $0$ denotes the identity of the group  $\left(B, \circ \right)$. Moreover, we call $\left(B, +\right)$  and $\left(B, \circ\right)$ the \emph{additive semigroup} and the \emph{multiplicative group} of the left semi-brace $\left(B, + , \circ \right)$, respectively. 
Furthermore, if the semigroup $\left(B,+\right)$ has a pre-fix, pertaining to some property of the semigroup, we will also use this pre-fix with the left semi-brace. Hence, the left semi-braces introduced in \cite{CCoSt17}, where one works under the restriction that the semigroup $\left(B,+\right)$ is left cancellative, will be called left cancellative left semi-braces.

Now, we recall that an element $u$ of an arbitrary semigroup $\left(S,+\right)$ is a \emph{middle unit} of $S$ if 
$a + u +b = a+ b$,
for all $a, b\in S$. Thus, $u+u$ is idempotent but $u$ itself need not be idempotent (see \cite[p. 98]{ClPr61}).
This is not the case for the element $0$  in the additive semigroup of a left semi-braces: the following proposition shows that $0$ is an idempotent middle unit. 
\begin{prop}\label{prop:prop-semi-brace}
	Let $B$ be a left semi-brace. Then, the following hold:
	\begin{enumerate}
		\item $0$ is a middle unit of $\left(B,+\right)$;
		\item $0$ is an idempotent of $\left(B,+\right)$;
		\item $B = B + B$;
		\item $B + 0$ 
		is a subgroup of $\left(B,\circ\right)$;
		\item $0+B$ 
		is a subsemigroup of $\left(B,\circ\right)$.
	\end{enumerate}
	\begin{proof}
		\begin{enumerate}
			\item See \cite[Lemma 2.4(1)]{JVA19}.
			\item Since, by $(1)$, $0 + 0 + 0 = 0 + 0$, we have that 
			\begin{align*}
				0 
				&= \left(0 + 0\right)^-\circ \left(0 + 0\right) 
				= \left(0 + 0\right)^-\circ 0 + \left(0 + 0\right)^-\circ\left(0 + 0 + 0\right)\\
				&= \left(0 + 0\right)^- + \left(0 + 0\right)^-\circ\left(0 + 0\right)  
				= \left(0 + 0\right)^- + 0.
			\end{align*}
			Thus, since $0$ is a middle unit,  
			\begin{align*}
				0 + 0 = \left(0 + 0\right)^- + 0 + 0 = \left(0 + 0\right)^- + 0 = 0.
			\end{align*}
			\item If $b\in B$, by $(2)$ we have that
			\begin{align*}
				b 
				= b\circ 0 
				= b\circ\left(0+0\right)
				= b\circ 0 +  b\circ\left(b^- + 0\right)\in B + B.
			\end{align*} 
			\item 
			By $(2)$ it is clear that $B+0$ is not empty. Moreover, by $(1)$, if $a,b\in B$, it holds that
			\begin{align*}
				\left(a + 0\right)^-&\circ \left(b + 0\right)
				= \left(a + 0\right)^-\circ b + \left(a + 0\right)^-\circ\left(a + 0 + 0\right)\\
				&= \left(a + 0\right)^-\circ b + \left(a + 0\right)^-\circ\left(a + 0\right)
				= \left(a + 0\right)^-\circ b + 0\in B+0.
			\end{align*}
			\item See \cite[Lemma 2.6(iii)]{JVA19}. 
		\end{enumerate}
	\end{proof}
\end{prop}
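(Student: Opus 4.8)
The plan is to derive all five statements from the single defining identity $a \circ (b+c) = a \circ b + a \circ (a^- + c)$ by specializing the arguments and exploiting the compatibility between $(B,+)$ and $(B,\circ)$, in particular the elementary group facts $0 \circ x = x = x \circ 0$ and $0^- = 0$. For item (1) I would set $a = 0$ in the identity: since $0$ is the $\circ$-identity, the terms $0 \circ (b+c)$ and $0 \circ b$ simplify, $0^- = 0$, and the identity collapses to $b + c = b + (0 + c) = b + 0 + c$ for all $b,c \in B$, which is precisely the assertion that $0$ is a middle unit of $(B,+)$.

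Item (2) is where a genuine idea is needed, and I expect it to be the main obstacle: as the preceding discussion emphasizes, a middle unit need not be idempotent, so (1) by itself does not suffice and the multiplicative group must enter essentially. Starting from $0 + 0 + 0 = 0 + 0$, which is (1) with both entries equal to $0$, I would apply the defining identity to the product $(0+0)^- \circ (0+0)$, taking $a = (0+0)^-$ and splitting the right-hand factor as $0 + 0$. Using $\bigl((0+0)^-\bigr)^- = 0+0$, the absorption $(0+0)^- \circ 0 = (0+0)^-$, and the middle-unit relation $0+0+0 = 0+0$, the right-hand side collapses so that $0 = (0+0)^- + 0$. Adding $0$ on the right and invoking the middle-unit property once more then yields $0 + 0 = (0+0)^- + 0 + 0 = (0+0)^- + 0 = 0$, establishing idempotency.

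With (1) and (2) available the remaining items are short. For (3), given $b \in B$ I would write $b = b \circ 0 = b \circ (0+0)$ using idempotency, then expand via the defining identity to get $b = b \circ 0 + b \circ (b^- + 0) \in B + B$; the inclusion $B + B \subseteq B$ is automatic. For (4), I would check the subgroup criterion for $B+0$ inside $(B,\circ)$: the set is nonempty since $0 = 0+0 \in B+0$, and for $a,b \in B$, applying the identity to $(a+0)^- \circ (b+0)$ with the right factor split as $b+0$, together with $(a+0)+0 = a+0$ from (2), gives $(a+0)^- \circ (b+0) = (a+0)^- \circ b + 0 \in B+0$, so $B+0$ is closed under $x \mapsto x^- \circ y$. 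Finally, for (5) I would apply the identity to $(0+a)\circ(0+b)$ with the right factor written as $0+b$; since $(0+a)\circ 0 = 0+a$, the product takes the form $(0+a) + z$ for some $z \in B$, and associativity of $+$ rewrites it as $0 + (a+z) \in 0+B$, proving closure of $0+B$ under $\circ$.
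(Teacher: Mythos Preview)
Your proposal is correct and follows essentially the same route as the paper: the arguments for (2), (3), and (4) are step-for-step the paper's own, and for (1) and (5)---which the paper defers to \cite{JVA19}---you supply exactly the standard short derivations one would expect (specializing $a=0$ for the middle-unit property, and expanding $(0+a)\circ(0+b)$ via the semi-brace identity for closure of $0+B$). The only cosmetic difference is that in (4) you justify $a+0+0=a+0$ via (2) rather than via (1), but either works.
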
 

To show the following theorem, let us recall that an arbitrary semigroup $\left(S,+\right)$ is said to be a \emph{rectangular group} if it is isomorphic to the direct product of a group and a rectangular band. For background and details on this topic we refer the reader to \cite{ClPr61}.
\begin{theor}\label{th:rect-group}
	Let $B$ be a completely simple left semi-brace. Then, the additive semigroup $\left(B,+\right)$ of $B$ is a rectangular group.
	\begin{proof}
		The thesis follows by \cite[Corollary 3.5]{Hi83}, which states that any completely simple semigroup with a middle unit is a rectangular group.
	\end{proof}
\end{theor}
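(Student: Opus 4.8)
The plan is to reduce the statement to a purely semigroup-theoretic fact and then verify it via the Rees matrix representation. By the prefix convention fixed after the definition, calling $B$ a \emph{completely simple} left semi-brace means exactly that its additive semigroup $\left(B,+\right)$ is completely simple. By \cref{prop:prop-semi-brace}(1), the identity $0$ of $\left(B,\circ\right)$ is a middle unit of $\left(B,+\right)$. Hence $\left(B,+\right)$ is a completely simple semigroup equipped with a middle unit, and the entire theorem collapses to the assertion that \emph{a completely simple semigroup with a middle unit is a rectangular group}. This is precisely Hickey's corollary \cite{Hi83}, so the quickest route is to invoke it; but to see the mechanism, I would carry out the argument explicitly, as sketched next.

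By the Rees theorem for completely simple semigroups, I would write $\left(B,+\right) \cong \mathcal{M}(G; I, \Lambda; P)$, the Rees matrix semigroup over a group $G$ with sandwich matrix $P = (p_{\lambda i})$, where the product of $(i,g,\lambda)$ and $(j,h,\mu)$ is $(i,\, g\, p_{\lambda j}\, h,\, \mu)$. A rectangular group is exactly such a semigroup whose sandwich matrix can be normalised to the identity, equivalently one whose $P$ admits a rank-one factorisation $p_{\lambda i} = u_\lambda\, v_i$ with $u_\lambda, v_i \in G$; in that case the map $(i,g,\lambda)\mapsto (i,\, u_\lambda^{-1} g\, v_i^{-1},\, \lambda)$ trivialises the sandwich matrix and identifies $B$ with $G \times (I \times \Lambda)$, a direct product of the group $G$ and the rectangular band $I \times \Lambda$. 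So the whole point is to extract this factorisation from the middle-unit hypothesis.

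Letting the middle unit correspond to $u = (i_0, g_0, \lambda_0)$ and imposing $a + u + b = a + b$ for all $a = (i,g,\lambda)$ and $b = (j,h,\mu)$, a direct comparison of the $G$-components gives, after cancelling the free factors $g$ and $h$,
\begin{equation*}
p_{\lambda j} = p_{\lambda i_0}\, g_0\, p_{\lambda_0 j}
\end{equation*}
for all $\lambda \in \Lambda$ and $j \in I$. Setting $u_\lambda := p_{\lambda i_0}$ and $v_j := g_0\, p_{\lambda_0 j}$ yields exactly the required factorisation $p_{\lambda j} = u_\lambda v_j$, and hence the normalisation above applies, exhibiting $\left(B,+\right)$ as a rectangular group.

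I expect the main obstacle to be purely bookkeeping rather than conceptual: verifying that the coordinate change which trivialises $P$ is genuinely a semigroup isomorphism onto the direct product, and keeping track of the index sets throughout. Once the rank-one identity $p_{\lambda j} = u_\lambda v_j$ is in hand the rest is routine, which is why citing Hickey's corollary is the most economical presentation; the computation above is simply the explicit specialisation of that corollary to the situation at hand.
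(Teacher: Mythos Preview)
Your reduction is exactly the paper's: observe that $(B,+)$ is completely simple by the prefix convention, that $0$ is a middle unit by \cref{prop:prop-semi-brace}(1), and then invoke Hickey's corollary. The paper's proof is that single citation and nothing more, so your proposal matches it precisely. The explicit Rees-matrix unpacking you add is a welcome supplement and the key step---deriving the rank-one factorisation $p_{\lambda j}=u_\lambda v_j$ from the middle-unit identity---is correct; the only slip is in the normalising isomorphism, which should be $(i,g,\lambda)\mapsto(i,\,v_i\,g\,u_\lambda,\,\lambda)$ rather than $(i,g,\lambda)\mapsto(i,\,u_\lambda^{-1}\,g\,v_i^{-1},\,\lambda)$ (a direct check against the product in the trivial-sandwich target shows your version fails to be a homomorphism). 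This is precisely the bookkeeping hazard you anticipated and does not affect the argument.
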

\noindent A special case in which the additive semigroup is completely simple is when $0+B$ is a subgroup of $\left(B, \circ\right)$ (see \cite[Theorem 2.8]{JVA19}). For instance, this is the case when $B$ is finite.\\
As a consequence of \cref{th:rect-group}, we have that the additive semigroup $\left(B,+\right)$ can be written as
\begin{align*}
	B = I + G+ \Lambda,
\end{align*}
that is the direct sum of the left zero semigroup $I:= E\left(B + 0\right)$,  the group 
$G:= 0 + B + 0$, and the right zero semigroup $\Lambda:= E\left(0+ B\right)$.
Moreover, the set of idempotents is $E\left(B\right) = I + \Lambda$ and it is a rectangular band. Where $E\left(S\right)$ denotes the set of idempotents of $S$, for any semigroup $S$.
\medskip

For the sake of completeness, let us introduce a further property of middle units of left semi-braces that holds without restrictions on the additive semigroup.
At first, we recall that the additive semigroup of a left semi-brace $B$ does not contain a zero element if $B$ has at least two elements (see \cite[Lemma 2.3]{JVA19}).\\
In the following, we prove that middle units of the additive structure of an arbitrary left semi-brace are idempotents. 
\begin{prop}\label{prop:middle-ide}
	Let $B$ be a left semi-brace and $e\in B$. Then, if $e$ is a middle unit then $e$ is an idempotent of the semigroup $\left(B,+\right)$.
	\begin{proof}
		Since $0$ is idempotent, we have that
		\begin{align*}
			e = e\circ 0
			= e\circ\left(0 + 0\right)
			= e\circ 0 + e\circ\left(e^- + 0\right)
			= e + e\circ\left(e^- + 0\right).
		\end{align*}
		Since $e$ is a middle unit, it follows that
		\begin{align*}
			e + e = e + e + e\circ\left(e^- + 0\right)
			= e + e\circ\left(e^- + 0\right)
			= e,
		\end{align*}
		which is our assertion.
	\end{proof}
\end{prop}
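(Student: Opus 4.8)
The plan is to isolate a single algebraic identity, namely $e = e + e\circ(e^- + 0)$, and then to use the middle-unit hypothesis to absorb the trailing summand and conclude that $e + e = e$.

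First I would pass through the multiplicative group in order to exploit the idempotency of $0$. Since $0$ is the identity of $\left(B,\circ\right)$ and is an idempotent of $\left(B,+\right)$ by \cref{prop:prop-semi-brace}(2), I can write $e = e\circ 0 = e\circ\left(0 + 0\right)$. Applying the defining law of a left semi-brace with $a = e$ and $b = c = 0$ then splits the right-hand side as $e\circ 0 + e\circ\left(e^- + 0\right) = e + e\circ\left(e^- + 0\right)$. This produces the key identity $e = e + e\circ\left(e^- + 0\right)$, which exhibits $e$ as a sum whose first summand is again $e$.

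With that identity available, the conclusion is short. I would substitute it for one of the two occurrences of $e$ in the sum $e + e$, obtaining $e + e = e + e + e\circ\left(e^- + 0\right)$, and then invoke the middle-unit property $a + e + b = a + b$ with $a = e$ and $b = e\circ\left(e^- + 0\right)$ to delete the central $e$. This leaves $e + e\circ\left(e^- + 0\right)$, which equals $e$ once more by the key identity, so $e + e = e$.

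The only delicate point, and the step I expect to carry the actual content, is the derivation of the key identity: it mixes both operations, relying on passing through $e\circ 0$, on the idempotency of $0$, and on instantiating the semi-brace axiom at precisely the arguments $b = c = 0$. Everything afterwards uses nothing beyond the hypothesis that $e$ is a middle unit, so the remainder is essentially a one-line cancellation. The argument thus closely parallels, and is marginally simpler than, the proof in \cref{prop:prop-semi-brace}(2) that $0$ itself is idempotent.
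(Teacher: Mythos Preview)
Your proposal is correct and follows exactly the same approach as the paper's proof: derive the identity $e = e + e\circ\left(e^- + 0\right)$ from the idempotency of $0$ and the semi-brace law at $b = c = 0$, then add $e$ on the left and use the middle-unit hypothesis to collapse $e + e$ back to $e$. The steps, the key identity, and the final absorption are identical to the paper's argument.
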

\medskip 

Following Ault's paper \cite[Theorem 1.8]{Au74}, by \cref{prop:middle-ide} we have that the additive semigroup of any left semi-brace contains a subsemigroup $\M_B$, called the \emph{semigroup of middle units of $B$}, that is explicitly given by 
\begin{align*}
\M_B= \lbrace x \ | \ x\in B, \ x \ \text{has inverse} \ x' \ \text{with} \ x + x', x' + x \ \text{middle units}\rbrace ,
\end{align*}
where the inverse $x'$ of $x$ means that $x' = x' + x + x'$ and $x = x + x' + x$.\\
Then, $M_B$ is a subsemigroup of $\left(B,+\right)$  that is a rectangular group. 
This is an interesting substructure of a left semi-brace, which is beyond the purpose of this paper and shall be studied elsewhere.
\medskip

Now, having as reference the \cite[Example 2]{CCoSt17}, we provide the following class of examples of completely simple left semi-braces that allow one to obtain solutions.
\begin{ex}
	Let $\left(B,\circ\right)$ be a group with identity $0$, $f,g$ idempotent endomorphisms of $\left(B,\circ\right)$ such that $fg=gf$. Let us consider the following operation 
	\begin{align*}
		a +b := b\circ fg\left(b^-\right)\circ f\left(a\right),
	\end{align*}
	for all $a,b\in B$. It is easy to check that $\left(B, +,\circ\right)$ is a completely simple left semi-brace. 
	Now, observe that the map $\rho$ is an anti-homomorphism from the group $\left(B,\circ\right)$ into the monoid $B^B$, where $B^B$ denotes the monoid of the functions from $B$ into itself. Indeed,
	$\rho_b$ is given by
	\begin{align*}
		\rho_b\left(a\right) 
		= \left(a^- + b\right)^-\circ b
		= \left(b\circ fg\left(b^-\right)\circ f\left(a^-\right)\right)^-\circ b
		= f\left(a\right)\circ fg\left(b\right),
	\end{align*} 
	moreover
	\begin{align*}
		\rho_{b}\rho_{a}\left(c\right)
		= \rho_{b}\left(f\left(c\right)\circ fg\left(a\right)\right)
		=  f^2\left(c\right)\circ f^2g\left(a\right)\circ fg\left(b\right)
		= f\left(c\right)\circ fg\left(a\circ b\right)
		= \rho_{a\circ b}\left(c\right),
	\end{align*}
	for all $a,b,c\in B$.
	Thus, by \cite[Proposition 2.14]{JVA19}, the semigroup $\left(B, +\right)$ is completely simple. 	\\
	In addition, since $\rho$ is an anti-homomorphism of the group $\left(B,\circ\right)$,  we obtain that the map $r:B\times B\to B\times B$ defined by $r\left(a,b\right) = \left(\lambda_a\left(b\right), \rho_b\left(a\right)\right)$ in \cite[Theorem 5.1]{JVA19} is a solution.
	Note that $\lambda_a\left(b\right) = a\circ b\circ fg\left(b^-\right)\circ f\left(a^-\right)$,  
	therefore, $r$ is explicitly given by
	\begin{align*}
		r\left(a, b\right)
		= \left(a\circ b\circ f\left(g\left(b^-\right)\circ a^-\right),
		f\left(a\circ g\left(b\right)\right)\right),
	\end{align*}
	for all $a,b\in B$.
\end{ex}

Let us examine special cases of the previous class of examples. 
Firstly, observe that if $f\neq \id$ and $g$ is not the constant map of value $0$, then the semigroup $\left(B,+\right)$ is not neither left nor right cancellative. 
Moreover, note that:\\
\noindent \textbf{Case 1}: if $g$ is the constant map of value $0$, then 
\begin{align*}
	a + b = b\circ f\left(0\right)\circ f\left(a\right) = b\circ f\left(a\right),
\end{align*}
for all $a, b\in B$, i.e.,  $B$ coincides with the left cancellative  left semi-brace provided in \cite[Example 2]{CCoSt17}.
Moreover,  the solution associated to $B$ is given by
\begin{align*}
	r\left(a, b\right)
	= \left(a\circ b\circ f\left(a^-\right),
	f\left(a\right)\right).
\end{align*}
\textbf{Case 2}: if $f =\id$, then
\begin{align*}
	a + b = b\circ g\left(b^-\right)\circ a,
\end{align*}
for all $a, b\in B$. In this case, the semigroup $\left(B,+\right)$ is right cancellative. Indeed, if 
$a + b = c + b$, it follows that  $b\circ g\left(b^-\right)\circ a 
= b\circ g\left(b^-\right)\circ c$, and so $a = c$.
Moreover, it is easy to check that $B$ is both right and left semi-brace.
In addition, note that the solution associated to $B$ is given by
\begin{align*}
	r\left(a, b\right)
	= \left(a\circ b\circ g\left(b^-\right)\circ a^-,
	a\circ g\left(b\right)\right).
\end{align*}
\textbf{Case 3}: if $f = g$, then
\begin{align*}
	a + b = b\circ f^2\left(b^-\right)\circ f\left(a\right) = b\circ f\left(b^-\right)\circ f\left(a\right),
\end{align*}
for all $a, b\in B$. Note that  if $a\in B$, it holds 
\begin{align*}
	a + a =  a\circ f\left(a^-\right)\circ f\left(a\right) = a,
\end{align*}
i.e., every element is idempotent with respect to the sum. 
In this case, the semigroup $\left(B, +\right)$ is a rectangular band where $\ker f = 0 + B$ and $\im f= B + 0$.
Moreover, the solution $r$ associated to $B$, given by
\begin{align*}
	r\left(a,b\right) 
	= \left(a\circ b\circ f\left(b^-\circ a^-\right), 
	f\left(a\circ b\right)
	\right),
\end{align*}
is an idempotent solution, consistently with \cite[Theorem 5.1]{JVA19} in the case in which  the group $G = \lbrace0\rbrace$. 
\bigskip

The following is a class of examples of completely simple left semi-braces such that, under suitable assumptions, give rise to solutions.
\begin{ex}
	Let $G$, $H$ be two groups, $B:=G\times H$ and consider the group $\left(B,\circ\right)$ where
	\begin{align*}
		\left(a,u\right)\circ \left(b,v\right) = \left(a \ ^{u}b, \ u^b \  v\right),
	\end{align*}
	for all $\left(a,u\right), \left(b,v\right)\in G\times H$, i.e., the classical Zappa-Sz\'{e}p product of $G$ and $H$ (see \cite{Ku83}) that has identity $\left(1,1\right)$.
	Let $\varphi$ be a map from $G$ into $H$ such that 
	$\varphi\left(1\right) = 1$ and define the following operation on $B$
	\begin{align*}
		\left(a,u\right) + \left(b,v\right) = \left(a, \ u\varphi\left(b\right)v\right),
	\end{align*}
	for all $\left(a,u\right),\left(b,v\right)\in G\times H$. 
	It is easy to check that the structure $\left(B,+,\circ\right)$ is a left semi-brace.
	Let us note that $\left(B,+\right)$ is a left group.
	Moreover, $\left(a,u\right)$ in $G\times H$ is idempotent with respect to the sum if and only if $\varphi\left(a\right) = u^{-1}$.
	In addition, if $\left(a,u\right), \left(b,v\right)\in E\left(B\right)$, we have that
	\begin{align*}
		\left(a,u\right) + \left(b,v\right)= \left(a, u\varphi\left(b\right)v\right)
		= \left(a, u\right),
	\end{align*}
	hence $E\left(B\right)$ is a sub-semigroup of $\left(B,+\right)$ and it is also a left zero semigroup. Note also that 
	$\left(a,u\right) + \left(1,1\right) 
	= \left(a, u\varphi\left(1\right)1\right)
	= \left(a, u\right)$,
	i.e., $\left(1,1\right)$ is a right identity with respect to the sum. 
	Furthermore, we have that
	\begin{align*}
		\rho_{\left(b,v\right)}\left(a,u\right)
		= \left({}^{\left(\varphi\left(b\right)v\right)^{-1}}\left(a \ ^{u}{b}\right), \ 
		\left(\left(\left(\varphi\left(b\right)v\right)^{-1}\right)^{a} \ u\right)^{b}\ v\right),
	\end{align*}
	for all $\left(a,u\right), \left(b,v\right), \left(c,w\right)\in B$. 
	One can check that $\rho$ is an anti-homomorphism if and only if it holds 
		\begin{align}\label{eq:rho-anti}
		\varphi\left(b\right)v\varphi\left(c\right)
		= \varphi\left(b \ ^{v}c\right)v^c,
		\end{align}
	for all $a,b\in G$ and $u\in H$. 
	Moreover, by the characterization \cite[Theorem 3]{CCoSt20-2}, one can verify that the map $r$ in \cite[Theorem 5.1]{JVA19} associated to the left semi-brace $B$ is a solution if and only if
	\begin{align}\label{eq:r-sol}
		\varphi\left(a\right)u\varphi\left(b\right)
		=\varphi\left(b\right) v \ \varphi\left(^{\left(\varphi\left(b\right)v\right)^{-1}}\left(a \ ^{u}b\right)\right)
		\left(\left(\left(\varphi\left(b\right)v\right)^{-1}\right)^{a} \ u \right)^b
	\end{align}
	holds, for all $a,b\in G$ and $u,v\in H$.
	On the other hand, note that, if $a,b\in G$ and $u\in H$, considered $v = \varphi\left(b\right)^{-1}$ in \eqref{eq:r-sol}, we obtain 
	\begin{align*}
		\varphi\left(a\right)u\varphi\left(b\right)
		= v^{-1} v \ \varphi\left(^{\left(v^{-1}v\right)^{-1}}\left(a \ ^{u}b\right)\right)
		\left(\left(\left(v^{-1}v\right)^{-1}\right)^{a} \ u \right)^b
		= \varphi\left(a \ ^{u}b\right)u^b,
	\end{align*}
	hence \eqref{eq:rho-anti} is satisfied.\\
	Now, let us consider $G$ the cyclic group $C_2$ of $2$ elements, $H$ the cyclic group $C_3$ of $3$ elements, and $\varphi$ the constant map of value $1$ from $G$ into $H$. 
	Hence, if $\left(B,\circ\right)$ is the cyclic group $C_2\times C_3$, then the condition \eqref{eq:rho-anti} trivially holds, hence $r$ is a solution. 
	Instead, if $\left(B,\circ\right)$ is the symmetric group $C_2\ltimes C_3$, then \eqref{eq:rho-anti} is not satisfied, equivalently \eqref{eq:r-sol} does not hold, hence $r$ is not a solution. 
\end{ex}

\section{Definitions and examples}

Braces, skew braces, and semi-braces were introduced to study set-theoretic solutions of the Yang-Baxter equation. The following definition generalizes these structure to the case in which the multiplicative structure is no more a group. 

	\medskip 
	
	At first, we recall that a semigroup $(S,\circ)$ is \emph{completely regular} if for any element $a$ of $S$ there exists a (unique) element $a^-$ of $S$ such that 
	\begin{equation}\label{cr}
	a= a\circ a^-\circ a , \quad a^-=a^-\circ a\circ a^-, \quad a\circ a^-=a^-\circ\, a.
	\end{equation}
	Conditions \eqref{cr} imply that $a^0:= a\circ a^-=a^-\circ a$ is an idempotent element of $(S,\circ)$.
	\begin{defin}
        Let $S$ be a set with two operations $+$ and $\circ$ such that $\left(S,+\right)$ is a semigroup (not necessarily commutative) and $\left(S,\circ\right)$ is a completely regular semigroup. Then, we say that $\left(S, + , \circ \right)$ is a \emph{generalized left semi-brace} if
        \begin{align}\label{key1}
            a \circ \left(b+c\right) = a \circ b + a\circ\left(a^- +c\right),
        \end{align}
        for all $a, b, c \in S$. 
        We call $(S,+)$ and $(S,\circ)$ the \emph{additive semigroup} and the 	\emph{multiplicative semigroup} of $S$, respectively.\\
        A \emph{generalized right semi-brace} is defined similarly, replacing condition \eqref{key1} by
        \begin{align}\label{key2}
            \left(a+b\right)\circ c =  \left(a+c^-\right )\circ c + b\circ c,
        \end{align}
        for all $a,b,c\in S$. \\
        A \emph{generalized two-sided semi-brace} is a generalized left semi-brace that is also a generalized right semi-brace with respect to the same pair of operations.
    \end{defin} 
\medskip

 Let us note that if $S$ is a generalized left semi-brace and $a\in S$, then the map
\begin{equation*}
		\lambda_a : S\longrightarrow S,\; b\longmapsto a\circ (a^-+b)
\end{equation*}
is an endomorphism of the semigroup $(S,+)$ and $\lambda_{a\circ b}(x)= (a\circ b)^0+ \lambda_a\lambda_b(x)$, for all $a,b,x\in S$.
Indeed, if $a,b,x,y\in S$, we have that
	\begin{align*}
		\lambda_a(x+y)  = a\circ (a^- +x +y) = a\circ (a^- + x) + a\circ (a^- +y)
		= \lambda_a(x) +\lambda_a(y).
	\end{align*}
	and 
	\begin{align*}
		\lambda_{a\circ b}(x) &= (a\circ b)\circ ((a\circ b)^-+ x)
		= a\circ (b\circ (a\circ b)^- + b\circ (b^-+x))\\
		&= a\circ b\circ (a\circ b)^- + a\circ (a^- +\lambda_b(x))\\
		&= (a\circ b)^0 + \lambda_a\lambda_b(x).
	\end{align*}

Of course, left semi-braces \cite{CCoSt17, JVA19} are examples of generalized left semi-braces. Moreover, a generalized left semi-brace can be obtained from every completly regular semigroup.
\begin{ex}
If $(S,\circ)$ is an arbitrary completely regular semigroup and $(S,+)$ is a right zero semigroup (or a left zero semigroup), then $(S,+,\circ)$ is a generalized two-sided semi-brace. 
\end{ex}

Unlike left semi-braces, a generalized left semi-brace $S$ can have a zero element even if $S$ has more then one element. Examples of such generalized left semi-braces can be easily obtained by any Clifford semigroup.
\begin{ex}\label{Cliff}
If $(S,\circ)$ is a Clifford semigroup, which is a completely regular semigroup where all idempotent elements are central, then  $(S,+,\circ)$, where $a+b=a\circ b$ for all $a,b\in S$, is a generalized two-sided semi-brace. 
\end{ex}

More in general, the previous generalized left semi-braces can be obtained through the following construction.
\begin{prop}\label{prop:StrongLatticeGeneralizedSemiBrace}
Let $Y$ be a (lower) semilattice, $\left\{S_{\alpha}\ \left|\ \alpha \in Y\right.\right\}$ a family of disjoint generalized left semi-braces. For each pair $\alpha,\beta$ of elements of $Y$ such that $\alpha \geq \beta$, let $\phii{\alpha}{\beta}:\s{\alpha}\to \s{\beta}$ be a homomorphism of generalized left semi-braces such that
\begin{enumerate}
    \item $\phii{\alpha}{\alpha}$ is the identical automorphism of $\s{\alpha}$, for every $\alpha \in Y$
    \item $\phii{\beta}{\gamma}{}\phii{\alpha}{\beta}{} = \phii{\alpha}{\gamma}{}$, for all $\alpha, \beta, \gamma \in Y$ such that $\alpha \geq \beta \geq \gamma$.
\end{enumerate}
Then, $S = \bigcup\left\{\s{\alpha}\ \left|\ \alpha\in Y\right.\right\}$ endowed by the addition and the multiplication defined by
\begin{align}\label{eq:newoperationGeneralizedSemiBrace}
    a+b= \phii{\alpha}{\alpha\beta}(a)+\phii{\beta}{\alpha\beta}(b),\\
     a\circ b= \phii{\alpha}{\alpha\beta}(a)\circ\phii{\beta}{\alpha\beta}(b),
\end{align}
for any $a\in \s{\alpha}$ and $b\in \s{\beta}$, is a generalized left semi-brace. Such a generalized left semi-brace is said to be the \emph{strong semilattice $Y$ of generalized left semi-brace $S_{\alpha}$} and is denoted by $S=[Y; S_\alpha,\phii{\alpha}{\beta}]$.
\end{prop}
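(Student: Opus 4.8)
The plan is to handle the two operations first as strong semilattices of semigroups, and then to reduce the single identity \eqref{key1} to an identity inside one component by projecting everything to the smallest index. First I would record the semigroup structure of both operations: since each $\phi_{\alpha,\beta}$ is in particular a homomorphism for $+$ and for $\circ$, and conditions (1)–(2) hold, the data $(\{S_\alpha\},\phi_{\alpha,\beta})$ is exactly that of a strong semilattice of semigroups for each operation, so both $(S,+)$ and $(S,\circ)$ are associative by Clifford's construction. I would make this explicit for $\circ$ by taking $a\in S_\alpha$, $b\in S_\beta$, $c\in S_\gamma$ and checking that, with $\delta=\alpha\beta\gamma$, both $(a\circ b)\circ c$ and $a\circ(b\circ c)$ collapse to
\begin{align*}
\phi_{\alpha,\delta}(a)\circ\phi_{\beta,\delta}(b)\circ\phi_{\gamma,\delta}(c)
\end{align*}
in $S_\delta$, using that the connecting maps preserve $\circ$ together with $\phi_{\beta\gamma,\delta}\phi_{\gamma,\beta\gamma}=\phi_{\gamma,\delta}$; the verbatim computation with $+$ in place of $\circ$ gives associativity of the addition.

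Next I would establish that $(S,\circ)$ is completely regular. For $a\in S_\alpha$ let $a^-$ be its inverse inside $(S_\alpha,\circ)$. Because $a,a^-\in S_\alpha$ and $\phi_{\alpha,\alpha}=\id$, each of the products $a\circ a^-\circ a$, $a^-\circ a\circ a^-$, $a\circ a^-$ and $a^-\circ a$ is formed entirely within $S_\alpha$, so the three identities \eqref{cr} transfer unchanged from $S_\alpha$ to $S$; hence $a^-$ is the (unique) completely regular inverse of $a$ in $(S,\circ)$. At this point I would also record the fact used repeatedly below: a semigroup homomorphism between completely regular semigroups preserves inverses, so $\phi_{\alpha,\beta}(a^-)=\phi_{\alpha,\beta}(a)^-$. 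This is obtained by applying $\phi_{\alpha,\beta}$ to the three relations in \eqref{cr} and invoking uniqueness of the inverse in the completely regular semigroup $S_\beta$.

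The heart of the argument, and the step needing the most care, is \eqref{key1}. Fix $a\in S_\alpha$, $b\in S_\beta$, $c\in S_\gamma$ and set $\delta=\alpha\beta\gamma$. For the left-hand side I would first form $b+c\in S_{\beta\gamma}$, then realise $a\circ(b+c)$ in $S_\delta$, and use the homomorphism property with (2) to obtain
\begin{align*}
a\circ(b+c)=\phi_{\alpha,\delta}(a)\circ\bigl(\phi_{\beta,\delta}(b)+\phi_{\gamma,\delta}(c)\bigr).
\end{align*}
Since the three arguments now lie in the single generalized left semi-brace $S_\delta$, I apply \eqref{key1} in $S_\delta$ and replace $\phi_{\alpha,\delta}(a)^-$ by $\phi_{\alpha,\delta}(a^-)$ via the inverse-preservation above. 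For the right-hand side I would compute $a\circ b\in S_{\alpha\beta}$ and $a\circ(a^-+c)\in S_{\alpha\gamma}$ separately and then add them in $S_{(\alpha\beta)(\alpha\gamma)}=S_\delta$; expanding each summand with the homomorphism property and (2) yields
\begin{align*}
a\circ b+a\circ(a^-+c)=\phi_{\alpha,\delta}(a)\circ\phi_{\beta,\delta}(b)+\phi_{\alpha,\delta}(a)\circ\bigl(\phi_{\alpha,\delta}(a^-)+\phi_{\gamma,\delta}(c)\bigr),
\end{align*}
which is exactly the rewritten left-hand side. The only real obstacle is bookkeeping: one must track which meet each intermediate product lives in and verify that every comparison is made only after projecting into the common component $S_\delta$, where \eqref{key1} holds on the nose, so that no idempotent correction terms ever enter.
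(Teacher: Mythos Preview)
Your proposal is correct and follows essentially the same route as the paper: both reduce the verification of \eqref{key1} to the single component $S_{\alpha\beta\gamma}$ by pushing $a$, $b$, $c$ through the connecting homomorphisms and then invoking \eqref{key1} there, with the paper treating the semigroup and completely regular claims as standard while you spell them out. Your explicit remark that a homomorphism of completely regular semigroups preserves inverses is exactly what the paper uses implicitly when it writes $(\phi_{\alpha,\zeta}(a))^-$ in place of $\phi_{\alpha,\zeta}(a^-)$.
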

    \begin{proof}
        First note that $\left(S,+\right)$ is a semigroup and $\left(S,\circ\right)$ is a completely regular semigroup. Now, let $a\in\s{\alpha}$, $b \in \s{\beta}$, and $c \in \s{\gamma}$. Set $\delta:=\alpha\beta$, $\varepsilon:=\beta\gamma$, $\zeta:=\alpha\gamma$, and $\eta:=\alpha\beta\gamma$. It follows that
        \begin{align*}
            a\circ\left(b+c\right)
            &=a\circ\left(\phii{\beta}{\varepsilon}\left(b\right)+\phii{\gamma}{\varepsilon}\left(c\right)\right)\\
            &=\phii{\alpha}{\eta}\left(a\right)\circ\phii{\varepsilon}{\eta}\left(\phii{\beta}{\varepsilon}\left(b\right)+\phii{\gamma}{\varepsilon}\left(c\right)\right)&\text{since $\alpha\varepsilon=\eta$}\\
            &=\phii{\alpha}{\eta}\left(a\right)\circ\left(\phii{\varepsilon}{\eta}\phii{\beta}{\varepsilon}\left(b\right)+\phii{\varepsilon}{\eta}\phii{\gamma}{\varepsilon}\left(c\right)\right)\\
            &=\phii{\alpha}{\eta}\left(a\right)\circ\left(\phii{\beta}{\eta}\left(b\right)+\phii{\gamma}{\eta}\left(c\right)\right)&\text{by 2.}\\
            &=\phii{\alpha}{\eta}\left(a\right)\circ\phii{\beta}{\eta}\left(b\right)+\phii{\alpha}{\eta}\left(a\right)\circ\left(\left(\phii{\alpha}{\eta}\left(a\right)\right)^-+\phii{\gamma}{\eta}\left(c\right)\right)
        \end{align*}
        where the last equality holds since $\s{\eta}$ is a generalized left semi-brace. Moreover,
        \begin{align*}
            a\circ b &+ a \circ\left(a^-+c\right)
            =\phii{\alpha}{\delta}\left(a\right)\circ\phii{\beta}{\delta}\left(b\right)+a\circ\left(\left(\phii{\alpha}{\zeta}\left(a\right)\right)^-+\phii{\gamma}{\zeta}\left(c\right)\right)\\
            &=\phii{\alpha}{\delta}\left(a\right)\circ\phii{\beta}{\delta}\left(b\right)+\phii{\alpha}{\zeta}\left(a\right)\circ\left(\left(\phii{\alpha}{\zeta}\left(a\right)\right)^-+\phii{\gamma}{\zeta}\left(c\right)\right)\\
            &=\phii{\delta}{\eta}\left(\phii{\alpha}{\delta}\left(a\right)\circ\phii{\beta}{\delta}\left(b\right)\right)+\phii{\zeta}{\eta}\left(\phii{\alpha}{\zeta}\left(a\right)\circ\left(\left(\phii{\alpha}{\zeta}\left(a\right)\right)^-+\phii{\gamma}{\zeta}\left(c\right)\right)\right)&\text{since $\delta\zeta=\eta$}\\
            &=\phii{\alpha}{\eta}\left(a\right)\circ\phii{\beta}{\eta}\left(b\right)+\phii{\alpha}{\eta}\left(a\right)\circ\left(\left(\phii{\alpha}{\eta}\left(a\right)\right)^-+\phii{\gamma}{\eta}\left(c\right)\right)&\text{by 2.}
        \end{align*}
        Therefore, $S$ is a generalized left semi-brace.
    \end{proof}
    
\begin{rem}
    If $S=[Y; S_{\alpha},\phii{\alpha}{\beta}]$ is a strong semilattice $Y$ of left semi-braces $S_{\alpha}$, then $\left(S,\circ\right)$ is a strong semilattice of groups, and hence, by \cite[Theorem 4.2.1]{Ho76book}, $\left(S,\circ\right)$ is a Clifford semigroup.
\end{rem}

\section{Solutions related to generalized left semi-braces}

This section is devoted to provide a sufficient condition to obtain solutions through a generalized left semi-brace. 
To this end, we recall that if $S$ is a left cancellatice left semi-brace, then the map $r:S\times S\to S\times S$ given by
\begin{align}\label{eq:r-solution}
r\left(a,b\right):=\left(a\circ\left(a^-+b\right), \left(a^-+b\right)^-\circ b\right)  
\end{align}
for all $a, b\in S$, is a solution. 
Moreover, \cite[Theorem $5.1$]{JVA19} gives a sufficient condition to obtain that 
the map in \eqref{eq:r-solution} is still a solution for a left semi-brace, not necessarily left cancellative.
In addition, in \cite[Theorem 3]{CCoSt20-2}, we state a necessary and sufficient condition to ensure that $r$ is a solution.

Specifically, if $\left(S,+,\circ\right)$ is a left semi-brace, then, the  map $r:S\times S \to S\times S$, defined by $r\left(a,b\right):=\left(a\circ\left(a^-+b\right), \left(a^-+b\right)^-\circ b\right)$, for all $a,b \in S$, is a solution if and only if 
	\begin{align}\label{eq:condsolutionsemi}
	a + \lambdaa{b}{\left(c\right)}\circ\left(0 + \rhoo{c}{\left(b\right)}\right) = a + b\circ \left(0+c\right)
	\end{align}
holds, for all $a,b,c\in S$.

Let us remark that if $S$ is a generalized left semi-brace with $\left(S,+\right)$ a right zero semigroup, then the map $r$ as in \eqref{eq:r-solution} is a solution if and only if it holds $(a\circ b)^0=(a^0\circ b)^0$, for all $a,b\in S$. 
Observe that semigroups $(S,\circ)$ satisfying such a condition, lie in the wide class of right cryptogroups, see \cite{PeRe99}. 
In this way, we get new idempotent solutions that are of the form $r\left(a, b\right) = \left(a^0, a\circ b\right)$, different from those obtained in \cite{Le17, MaShi18, CvVer20, StVo20x}.\\ 
Moreover, note that if $\left(S,+,\circ\right)$ is the generalized left semi-brace of \cref{Cliff}, we obtain that $r\left(a,b\right) = \left(a^0\circ b, b^-\circ a\circ b\right)$ is a solution.
In particular, if $\left(S,\circ\right)$ is commutative, clearly $r\left(a,b\right) = \left(a^0\circ b, b^0\circ a\right)$ and it is easy to verify that $r$ is a cubic solution, i.e., $r^3 = r$.
\medskip

Our aim is to show that if $S=[Y; S_\alpha,\phii{\alpha}{\beta}]$ is a strong semilattice of generalized left semi-braces such that every $S_\alpha$ satisfies condition \eqref{eq:condsolutionsemi},  then the map 
in \eqref{eq:r-solution} is a solution. 
This result is a consequence of a more general construction technique on solutions we introduce in the following theorem. 
\begin{theor}\label{th:sol-sss}
	Let $Y$ be a (lower) semilattice, 
	$\left\{\X{\alpha}\ \left|\ \alpha\in Y \right.\right\}$ a family of disjoint sets indexed by $Y$, and $r_{\alpha}$ a solution on $\X{\alpha}$, for every $\alpha\in Y$.  
	For each pair $\alpha,\beta$ of elements of $Y$ such that $\alpha\geq \beta$, let $\phii{\alpha}{\beta}:\X{\alpha}\to \X{\beta}$ be a map. 
	If the following conditions are satisfied 
	\begin{enumerate}
		\item $\phii{\alpha}{\alpha}$ is the identity map of $\X{\alpha}$, for every $\alpha\in Y$,
		\item $\phii{\beta}{\gamma}\phii{\alpha}{\beta} = \phii{\alpha}{\gamma}$, for all $\alpha, \beta, \gamma \in Y$ such that $\alpha \geq \beta \geq \gamma$,
		\item 
		$\left(\phii{\alpha}{\beta}\times \phii{\alpha}{\beta}\right)r_{\alpha}
		= r_{\beta}\left(\phii{\alpha}{\beta}\times \phii{\alpha}{\beta}\right)$, for all $\alpha, \beta\in Y$ such that  $\alpha \geq \beta$,
	\end{enumerate}
	set $X = \bigcup\left\{\X{\alpha}\ \left|\ \alpha\in Y\right.\right\}$,  then the map $r:X\times X \longrightarrow X\times X$ defined by 
	\begin{align*}
	r\left(x, y\right):= 
	r_{\alpha\beta}\left(\phii{\alpha}{\alpha\beta}\left(x\right),
	\phii{\beta}{\alpha\beta}\left(y\right) \right),
	\end{align*}	
	for all $x\in X_{\alpha}$ and $y\in X_{\beta}$, is a solution on $X$. 
	We call the pair $\left(X, r\right)$ the \emph{strong semilattice $Y$ of solutions $\left(X_\alpha, r_\alpha\right)$}.
\end{theor}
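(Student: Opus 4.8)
The plan is to exploit that every computation prescribed by the Yang--Baxter equation on three elements $x\in\X{\alpha}$, $y\in\X{\beta}$, $z\in\X{\gamma}$ eventually ``falls to the bottom'' of the semilattice at the meet $\delta:=\alpha\beta\gamma$, where the single solution $r_{\delta}$ already satisfies the equation. Concretely, I would first record a \emph{lowering lemma}: for all $x\in\X{\alpha}$, $y\in\X{\beta}$ and every $\delta\in Y$ with $\delta\le\alpha\beta$,
\begin{align*}
\left(\phii{\alpha\beta}{\delta}\times\phii{\alpha\beta}{\delta}\right)r(x,y)=r_{\delta}\left(\phii{\alpha}{\delta}(x),\phii{\beta}{\delta}(y)\right).
\end{align*}
This follows by unwinding the definition of $r$, applying hypothesis (3) to the pair $\alpha\beta\ge\delta$ to move $\phii{\alpha\beta}{\delta}\times\phii{\alpha\beta}{\delta}$ across $r_{\alpha\beta}$, and then collapsing the composites $\phii{\alpha\beta}{\delta}\phii{\alpha}{\alpha\beta}=\phii{\alpha}{\delta}$ and $\phii{\alpha\beta}{\delta}\phii{\beta}{\alpha\beta}=\phii{\beta}{\delta}$ via (1)--(2). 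In particular $r$ sends $\X{\alpha}\times\X{\beta}$ into $\X{\alpha\beta}\times\X{\alpha\beta}$, which is what makes all the index arithmetic below close up.

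Next, I fix $x\in\X{\alpha}$, $y\in\X{\beta}$, $z\in\X{\gamma}$, put $\delta:=\alpha\beta\gamma$, and set $\bar x:=\phii{\alpha}{\delta}(x)$, $\bar y:=\phii{\beta}{\delta}(y)$, $\bar z:=\phii{\gamma}{\delta}(z)$, all lying in $\X{\delta}$. I claim that applying the left-hand composition $(r\times\id)(\id\times r)(r\times\id)$ to $(x,y,z)$ produces exactly the same triple as applying $(r_{\delta}\times\id)(\id\times r_{\delta})(r_{\delta}\times\id)$ to $(\bar x,\bar y,\bar z)$, and likewise for the right-hand compositions. Since $r_{\delta}$ is a solution on $\X{\delta}$, the two $r_{\delta}$-computations agree, and the claim then forces the two $r$-computations to agree as well, which is precisely the Yang--Baxter equation for $r$.

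To verify the claim I would track the three applications of $r$ one at a time. The first application $r(x,y)$ lands in $\X{\alpha\beta}\times\X{\alpha\beta}$, and the lowering lemma with $\delta\le\alpha\beta$ identifies its projection to $\X{\delta}\times\X{\delta}$ with $r_{\delta}(\bar x,\bar y)$. The second application is of the form $r(u,z)$ with $u\in\X{\alpha\beta}$, so it is valued in $\X{(\alpha\beta)\gamma}\times\X{(\alpha\beta)\gamma}=\X{\delta}\times\X{\delta}$; here the meet is already $\delta$, so by definition $r(u,z)=r_{\delta}\left(\phii{\alpha\beta}{\delta}(u),\bar z\right)$, and $\phii{\alpha\beta}{\delta}(u)$ is exactly the first output of $r_{\delta}(\bar x,\bar y)$ by the previous step. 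The third application is $r(v,w)$ with $v\in\X{\alpha\beta}$ and $w\in\X{\delta}$, hence valued in $\X{(\alpha\beta)\delta}\times\X{(\alpha\beta)\delta}=\X{\delta}\times\X{\delta}$, and again reduces to $r_{\delta}$ on the projected arguments. Matching these three outputs against the three applications of $r_{\delta}$, and using associativity and idempotence of the meet ($(\alpha\beta)\gamma=(\alpha\beta)\delta=\delta$ together with $\phii{\delta}{\delta}=\id$), yields the asserted equality; the right-hand side is handled symmetrically with $\beta\gamma$ and $\alpha(\beta\gamma)=\delta$ in place of $\alpha\beta$ and $(\alpha\beta)\gamma$.

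The conceptual content is entirely in the lowering lemma; the main obstacle is purely the bookkeeping of indices, namely checking at each of the three stages that the relevant meet has already collapsed to $\delta$ and that the projected arguments fed into $r_{\delta}$ coincide with the intermediate quantities produced by the earlier stages. Once one is disciplined about recording every intermediate element together with the block $\X{\bullet}$ it belongs to, each stage becomes a direct application of the lemma and of (1)--(2), and no genuine computation remains.
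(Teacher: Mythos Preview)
Your argument is correct and follows essentially the same strategy as the paper: both reduce the Yang--Baxter equation for $r$ on a triple $(x,y,z)\in\X{\alpha}\times\X{\beta}\times\X{\gamma}$ to the Yang--Baxter equation for $r_{\delta}$ on the projected triple in $\X{\delta}$, where $\delta=\alpha\beta\gamma$. The only difference is packaging: the paper splits each $r_{\omega}$ into its components $\lambda^{[\omega]}$ and $\rho^{[\omega]}$ and verifies the three braid relations separately by explicit computation, whereas your lowering lemma handles the full map $r$ at once and makes the bookkeeping a bit more transparent.
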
	
	
The proof of \cref{th:sol-sss} is technical, and for the sake of clarity, we present it in the next section.	
Now, as a consequence of this theorem, we obtain the following result.
\begin{theor}
	Let $S=[Y; S_\alpha,\phii{\alpha}{\beta}{}]$ be  a strong semilattice of generalized left semi-braces. Then, if $\s{\alpha}$ satisfies \eqref{eq:condsolutionsemi}, for every $\alpha \in Y$, then the map $r_S:S \times S \to S \times S$ defined by 
	\begin{align*}
	r\left(a,b\right):=\left(a\circ\left(a^-+b\right), \left(a^-+b\right)^-\circ b\right),  
	\end{align*}
	for all $a,b \in S$, is a solution.
\end{theor}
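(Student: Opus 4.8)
The plan is to deduce the statement directly from \cref{th:sol-sss}, taking $\X{\alpha}:=\s{\alpha}$ and letting $r_{\alpha}$ be the map associated with the semi-brace structure of $\s{\alpha}$. First I would record that, since each $\s{\alpha}$ satisfies \eqref{eq:condsolutionsemi}, the necessary and sufficient condition of \cite[Theorem 3]{CCoSt20-2} recalled above guarantees that $r_{\alpha}(a,b)=(a\circ(a^-+b),(a^-+b)^-\circ b)$ is a solution on $\s{\alpha}$, for every $\alpha\in Y$. This puts the family $\{(\s{\alpha},r_{\alpha})\}$ into the setting required by \cref{th:sol-sss}.

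Next I would verify the three compatibility conditions of \cref{th:sol-sss} for the connecting maps $\phii{\alpha}{\beta}$. Conditions $(1)$ and $(2)$ are immediate, since they coincide verbatim with the defining axioms of the strong semilattice in \cref{prop:StrongLatticeGeneralizedSemiBrace}. The crux is condition $(3)$, that is $\left(\phii{\alpha}{\beta}\times\phii{\alpha}{\beta}\right)r_{\alpha}=r_{\beta}\left(\phii{\alpha}{\beta}\times\phii{\alpha}{\beta}\right)$ for $\alpha\geq\beta$. Here I would use that $\phii{\alpha}{\beta}$, being a homomorphism of generalized left semi-braces, preserves both $+$ and $\circ$, and that a homomorphism of completely regular semigroups automatically preserves the pseudo-inverse: since $a^-$ is the unique element satisfying \eqref{cr}, applying $\phii{\alpha}{\beta}$ to those three identities shows $\phii{\alpha}{\beta}(a^-)$ is the pseudo-inverse of $\phii{\alpha}{\beta}(a)$. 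Writing $\phi:=\phii{\alpha}{\beta}$, one then gets $\phi(a\circ(a^-+b))=\phi(a)\circ(\phi(a)^-+\phi(b))$ and $\phi((a^-+b)^-\circ b)=(\phi(a)^-+\phi(b))^-\circ\phi(b)$, so that $(\phi\times\phi)r_{\alpha}(a,b)=r_{\beta}(\phi(a),\phi(b))$, which is exactly condition $(3)$.

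With all hypotheses checked, \cref{th:sol-sss} yields that the map $r(a,b)=r_{\alpha\beta}\bigl(\phii{\alpha}{\alpha\beta}(a),\phii{\beta}{\alpha\beta}(b)\bigr)$, for $a\in\s{\alpha}$ and $b\in\s{\beta}$, is a solution on $S$. It then remains to identify this $r$ with $r_S$ by a direct computation using the operations \eqref{eq:newoperationGeneralizedSemiBrace} together with the semilattice identities $\alpha(\alpha\beta)=\alpha\beta$ and $(\alpha\beta)\beta=\alpha\beta$. For $a\in\s{\alpha}$ one has $a^-\in\s{\alpha}$, since $\phii{\alpha}{\alpha}$ is the identity, so the global $\circ$ restricts to that of $\s{\alpha}$; writing $\bar a=\phii{\alpha}{\alpha\beta}(a)$ and $\bar b=\phii{\beta}{\alpha\beta}(b)$ and using that $\phii{\alpha}{\alpha\beta}$ commutes with inverses, the first component $a\circ(a^-+b)$ collapses to $\bar a\circ(\bar a^-+\bar b)$ inside $\s{\alpha\beta}$, and likewise the second component $(a^-+b)^-\circ b$ collapses to $(\bar a^-+\bar b)^-\circ\bar b$, giving $r_S(a,b)=r_{\alpha\beta}(\bar a,\bar b)=r(a,b)$.

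The hard part will be the bookkeeping in this final identification: one must carefully track in which fibre each partial product lives and repeatedly invoke the absorbing identities of the semilattice, as well as the fact that the connecting maps commute with the formation of pseudo-inverses, in order to see that the two a priori different closed forms for $r$ genuinely agree. By contrast, condition $(3)$ itself, once the inverse-preservation of $\phii{\alpha}{\beta}$ is noted, reduces to a short formal verification.
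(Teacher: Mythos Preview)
Your proposal is correct and follows essentially the same route as the paper: invoke \cref{th:sol-sss} with $X_\alpha=S_\alpha$ and $r_\alpha$ the associated map, check conditions (1)--(3) using that the $\phi_{\alpha,\beta}$ are semi-brace homomorphisms (hence preserve $+$, $\circ$, and the pseudo-inverse), and then identify the resulting strong-semilattice solution with $r_S$ via the definitions in \cref{prop:StrongLatticeGeneralizedSemiBrace}. Your anticipated ``hard part'' is somewhat overstated: the final identification collapses in a single line once one uses \eqref{eq:newoperationGeneralizedSemiBrace}, exactly as the paper does.
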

\begin{proof}
	For any $\alpha \in Y$, let $r_{\alpha}:S_{\alpha}\times S_{\alpha}\to S_{\alpha}\times S_{\alpha}$ be the solution associated to the left semi-brace $S_{\alpha}$, i.e., the map defined by $r_{\alpha}\left(x,y\right)=\left(x\circ\left(x^-+y\right),\left(x^-+y\right)^-\circ y\right)$. Since $S$ is a strong semilattice of left semi-braces, by \cref{prop:StrongLatticeGeneralizedSemiBrace}, $\phii{\alpha}{\alpha}{}$ is the identical automorphism of $S_{\alpha}$ and $\phii{\beta}{\gamma}\phii{\alpha}{\beta} = \phii{\alpha}{\gamma}$, for all $\alpha, \beta, \gamma \in Y$ such that $\alpha \geq \beta \geq \gamma$. 
	Hence, conditions $1.$ and $2.$ in \cref{th:sol-sss} are satisfied. Moreover, let $a,b \in Y$ such that $\alpha \geq \beta$. Since, $\phii{\alpha}{\beta}$ is a homomorphism of left semi-braces, for all $x,y\in S_{\alpha}$ it follows that
	\begin{align*}
	&\left(\phii{\alpha}{\beta}\times \phii{\alpha}{\beta}\right)r_{\alpha}\left(x,y\right)=\left(\phii{\alpha}{\beta}\left(x\circ\left(x^-+y\right)\right), \phii{\alpha}{\beta}\left(\left(x^-+y\right)^-\circ y\right)\right)\\
	&=\left(\phii{\alpha}{\beta}\left(x\right)\circ\left(\left(\phii{\alpha}{\beta}\left(x\right)\right)^-+\phii{\alpha}{\beta}\left(y\right)\right), \left(\left(\phii{\alpha}{\beta}\left(x\right)\right)^-+\phii{\alpha}{\beta}\left(y\right)\right)^-\circ\phii{\alpha}{\beta}\left(y\right)\right)\\
	&=r_{\beta}\left(\phii{\alpha}{\beta}\left(x\right),\phii{\alpha}{\beta}\left(y\right)\right)\\
	&=r_{\beta}\left(\phii{\alpha}{\beta}\times \phii{\alpha}{\beta}\right)\left(x,y\right).
	\end{align*}
	Hence $(3)$ in \cref{th:sol-sss} holds. Therefore, according to \cref{th:sol-sss} we shall consider the strong semilattice $Y$ of solutions $r_{\alpha}$, i.e., the map $r$ defined by
	\begin{align*}
	r\left(x,y\right)=r_{\alpha\beta}\left(\phii{\alpha}{\alpha\beta}\left(x\right),\phii{\beta}{\alpha\beta}\left(y\right)\right),
	\end{align*}
	for all $x\in S_{\alpha}$, $y\in S_{\beta}$. Finally, note that, by \cref{prop:StrongLatticeGeneralizedSemiBrace}
	\begin{align*}
	&r\left(x,y\right) \\&\quad= \left(\phii{\alpha}{\alpha\beta}\left(x\right)\circ\left(\left(\phii{\alpha}{\alpha\beta}\left(x\right)\right)^-+\phii{\beta}{\alpha\beta}\left(y\right)\right), \left(\left(\phii{\alpha}{\alpha\beta}\left(x\right)\right)^-+\phii{\beta}{\alpha\beta}\left(y\right)\right)^-\circ \phii{\beta}{\alpha\beta}\left(y\right)\right)\\
	&\quad=\left(x\circ\left(x^-+y\right),\left(x^-+y\right)^-\circ y\right),
	\end{align*}
	for all $x\in S_{\alpha}$, $y\in S_{\beta}$. 
\end{proof}
	
\section{Strong semilattices of set-theoretical solutions}
This section aims to provide a proof of \cref{th:sol-sss} and to give some examples of strong semilattices of solutions. Furthermore, we focus on analyzing strong semilattices of solutions with finite order.
\smallskip

	\begin{proof}[{Proof of \cref{th:sol-sss}}]
	At first, note that if 
	$\lambdai{[\omega]}{x}{}$ and $\rhoi{[\omega]}{y}{}$ are the maps from $X_{\omega}$ into itself that define every solution $r_{\omega}$, i.e.,  $r_{\omega}$ is written as 
	$r_{\omega}\left(x, y\right) = \left(\lambdai{[\omega]}{x}{\left(y\right)}, \rhoi{[\omega]}{y}{\left(x\right)}\right)$, for all $x, y\in X_\omega$, then the condition $3.$ is equivalent to the following equalities
	\begin{align}
	\phii{\omega}{\iota}\lambdai{[\omega]}{x}{\left(y\right)} 
	= \lambdai{[\iota]}{\phii{\omega}{\iota}{\left(x\right)}}{\phii{\omega}{\iota}\left(y\right)}
	\label{eq:phi-lambda}
	\\
	\phii{\omega}{\iota}\rhoi{[\omega]}{y}{\left(x\right)} 
	= \rhoi{[\iota]}{\phii{\omega}{\iota}\left(y\right)}{\phii{\omega}{\iota}\left(x\right)}
	\label{eq:phi-rho}
	\end{align}
	for all $\omega,\iota\in Y$ such that $\omega\geq \iota$ and $x,y\in X_\omega$.
	In addition, let us observe that if $x\in X_\omega$ and $y\in Y_\iota$, then the two components of the map $r$, i.e.,  
	\begin{align*}
		\lambdaa{x}{\left(y\right)}
		= 
		\lambdai{[\omega\iota]}{\phii{\omega}{\omega\iota}\left(x\right)}{\phii{\iota}{\omega\iota}\left(y\right)}
		\qquad 
		\rhoo{y}{\left(x\right)}
		=
		\rhoi{[\omega\iota]}{\phii{\iota}{\omega\iota}\left(y\right)}{\phii{\omega}{\omega\iota}\left(x\right)},
	\end{align*}
	 lie in $X_{\omega\iota}$, consistently with the second part of the subscript of the maps $\phi$.
	To avoid overloading the notation, hereinafter we will write the previous elements as 
	\begin{align*}
		\lambdaa{x}{\left(y\right)}
		= 
		\lambdaa{\phii{\omega}{\omega\iota}\left(x\right)}{\phii{\iota}{\omega\iota}\left(y\right)}
		\qquad
		\rhoo{y}{\left(x\right)}
		=
		\rhoo{\phii{\iota}{\omega\iota}\left(y\right)}{\phii{\omega}{\omega\iota}\left(x\right)}.
	\end{align*}
	Now, we verify that $r$ is a solution proving that the relations 
		\begin{align*}
		& 
		L_{1}:=
		\lambdaa{x}{\lambdaa{y}{\left(z\right)}}
		=
		\lambdaa{\lambdaa{x}{\left(y\right)}}{\lambdaa{\rhoo{y}{\left(x\right)}}{\left(z\right)}}
		=: L_{2} 
		\\
		&C_{1}:=
		\lambdaa{\rhoo{\lambdaa{y}{\left(z\right)}}{\left(x\right)}}{\rhoo{z}{\left(y\right)}} 
		=
		\rhoo{\lambdaa{\rhoo{y}{\left(x\right)}}{\left(z\right)}}{\lambdaa{x}{\left(y\right)}}
		=: C_{2}
		\\
		&R_{2}:=
		\rhoo{\rhoo{y}{\left(z\right)}}{\rhoo{\lambdaa{z}{\left(y\right)}}{\left(x\right)}} 
		= \rhoo{z}{\rhoo{y}{\left(x\right)}}
		=:R_{1}
		\end{align*}
		are satisfied, for all $x, y, z\in X$.
		At this purpose, let $x, y, z$ be elements of $\X{\omega}$, $\X{\iota}$, $\X{\kappa}$ respectively, assume
		$\nu:= \omega\iota\kappa$ and 
		\begin{align*}
		&\mathcal{X}:= \phii{\omega}{\nu}{\left(x\right)}
		\qquad
		\mathcal{Y}:= \phii{\iota}{\nu}{\left(y\right)}
		\qquad
		\mathcal{Z}:= \phii{\kappa}{\nu}{\left(z\right)}
		&\mbox{in $\X{\nu}$}.
		\end{align*}
		Setting 
		\begin{align*}
		&U:= \rhoo{\phii{\iota}{\omega\iota}\left(y\right)}{\phii{\omega}{\omega\iota}\left(x\right)}
		\qquad
		V:= \lambdaa{\phii{\omega}{\omega\iota}\left(x\right)}{\phii{\iota}{\omega\iota}\left(y\right)}
		&\mbox{in $\X{\omega\iota}$}
		\end{align*}
		we have that  
		\begin{align*}
		L_{2}
		&= \lambdaa{\lambdaa{x}{\left(y\right)}}{\lambdaa{\rhoo{y}{\left(x\right)}}{\left(z\right)}}
		=
		\lambdaa{V}{\lambdaa{U}{\left(z\right)}}\\
		&= \lambdaa{V}
		{\lambdaa{\phii{\omega\iota}{\nu}\left(U\right)}
			{\phii{\kappa}{\nu}\left(z\right)}}
		&\mbox{$\nu = \omega\iota\kappa$}\\
		&= \lambdaa
		{\phii{\omega\iota}{\omega\iota\nu}\left(V\right)}
		{\phii{\nu}{\omega\iota\nu}
			\lambdaa{\phii{\omega\iota}{\nu}\left(U\right)}
			{\left(\mathcal{Z}\right)}}\\
		&= \lambdaa
		{\phii{\omega\iota}{\nu}\left(V\right)}
		{\phii{\nu}{\nu}
			\lambdaa{\phii{\omega\iota}{\nu}\left(U\right)}
			{\left(\mathcal{Z}\right)}}
		&\mbox{$\omega\iota\nu = \nu$}\\
		&= \lambdaa
		{\phii{\omega\iota}{\nu}\left(V\right)}
		{\lambdaa{\phii{\omega\iota}{\nu}\left(U\right)}
			{\left(\mathcal{Z}\right)}}.
		&\mbox{$\phii{\nu}{\nu} = \id_{\X{\nu}}$}
		\end{align*}
		Since
		\begin{align*}
		\phii{\omega\iota}{\nu}
		&\left(V\right)
		= 
		\lambdaa{\phii{\omega\iota}{\nu}\phii{\omega}{\omega\iota}\left(x\right)}
		{\phii{\omega\iota}{\nu}\phii{\iota}{\omega\iota}\left(y\right)}
		&\mbox{by \eqref{eq:phi-lambda}}\\
		&= \lambdaa{\phii{\omega}{\nu}\left(x\right)}{\phii{\iota}{\nu}\left(y\right)}
		&\mbox{$\phii{\omega\iota}{\nu}\phii{\omega}{\omega\iota} 
			= \phii{\omega}{\nu}$, 
			$\phii{\omega\iota}{\nu}\phii{\iota}{\omega\iota} = \phii{\iota}{\nu}$
		}\\
		&= \lambdaa{\mathcal{X}}{\left(\mathcal{Y}\right)}
		\end{align*}
		and
		\begin{align*}
		\phii{\omega\iota}{\nu}
		&\left(U\right)
		= \rhoo{\phii{\omega\iota}{\nu}\phii{\iota}{\omega\iota}\left(y\right)}{\phii{\omega\iota}{\nu}\phii{\omega}{\omega\iota}\left(x\right)}
		&\mbox{by \eqref{eq:phi-rho}}\\
		&= \rhoo{\phii{\iota}{\nu}\left(y\right)}
		{\phii{\omega}{\nu}\left(x\right)}
		&\mbox{$\phii{\omega\iota}{\nu}\phii{\iota}{\omega\iota} 
			= \phii{\iota}{\nu}$, 
			$\phii{\omega\iota}{\nu}\phii{\omega}{\omega\iota} = \phii{\omega}{\nu}$
		}\\
		&= \rhoo{\mathcal{Y}}{\left(\mathcal{X}\right)},
		\end{align*}
		it follows that 
		\begin{align*}
		L_{2} &= 
		\lambdaa{\lambdaa{\mathcal{X}}{\left(\mathcal{Y}\right)}}
		{\lambdaa
			{\rhoo{\mathcal{Y}}{\left(\mathcal{X}\right)}}
			{\left(\mathcal{Z}\right)}}
		&\mbox{in $\X{\nu}$}\\
		&=  \lambdaa{\mathcal{X}}{\lambdaa{\mathcal{Y}}{\left(\mathcal{Z}\right)}}.
		&\mbox{$r_{\nu}$ is a solution}
		\end{align*}
		Moreover, it holds
		\begin{align*}
		L_{1} &=\lambdaa{x}{\lambdaa{y}{\left(z\right)}}
		=\lambdaa{x}
		{\lambdaa{\phii{\iota}{\iota\kappa}\left(y\right)}
			{\phii{\kappa}{\iota\kappa}\left(z\right)}}\\
		&=\lambdaa{\phii{\omega}{\nu}\left(x\right)}
		{\phii{\iota\kappa}{\nu}\lambdaa{\phii{\iota}{\iota\kappa}\left(y\right)}{\phii{\kappa}{\iota\kappa}\left(z\right)}}
		&\mbox{$\nu=\omega\iota\kappa$}\\
		&= \lambdaa
		{\mathcal{X}}
		{\lambdaa{\phii{\iota\kappa}{\nu}\phii{\iota}{\iota\kappa}\left(y\right)}
			{\phii{\iota\kappa}{\nu}\phii{\kappa}{\iota\kappa}\left(z\right)}}
		&\mbox{by \eqref{eq:phi-lambda}}\\
		&= \lambdaa
		{\mathcal{X}}
		{\lambdaa{\phii{\iota}{\nu}\left(y\right)}
			{\phii{\kappa}{\nu}\left(z\right)}}
		&\mbox{$\phii{\iota\kappa}{\nu}\phii{\iota}{\iota\kappa} 
			= \phii{\iota}{\nu}$, 
			$\phii{\iota\kappa}{\nu}\phii{\kappa}{\iota\kappa} = \phii{\kappa}{\nu}$}\\
		&= \lambdaa
		{\mathcal{X}}{\lambdaa{\mathcal{Y}}{\left(\mathcal{Z}\right)}},
		\end{align*}
		hence we obtain that $L_{1} = L_{2}$.
		Now, setting
		\begin{align*}
		&W:= \rhoo{\phii{\kappa}{\iota\kappa}\left(z\right)}{\phii{\iota}{\iota\kappa}\left(y\right)}
		\qquad
		Z:= \lambdaa{\phii{\iota}{\iota\kappa}\left(y\right)}{\phii{\kappa}{\iota\kappa}\left(z\right)}
		&\mbox{in $\X{\iota\kappa}$}
		\end{align*}
		observe that 
		\begin{align*}
		C_{1} 
		&= \lambdaa{\rhoo{\lambdaa{y}{\left(z\right)}}{\left(x\right)}}{\rhoo{z}{\left(y\right)}}\\
		&=\lambdaa{\rhoo{\lambdaa{\phii{\iota}{\iota\kappa}\left(y\right)}{\phii{\kappa}{\iota\kappa}\left(z\right)}}{\left(x\right)}}
		{\rhoo{\phii{\kappa}{\iota\kappa}\left(z\right)}{\phii{\iota}{\iota\kappa}\left(y\right)}}\\
		&= \lambdaa{\rhoo{Z}{\left(x\right)}}
		{\left(W\right)}\\
		&= \lambdaa{\rhoo{\phii{\iota\kappa}{\nu}\left(Z\right)}
			{\phii{\omega}{\nu}}\left(x\right)}
		{\left(W\right)}
		&\mbox{$\nu =\omega\iota\kappa$}\\
		&= 
		\lambdaa{\phii{\nu}
			{\nu\iota\kappa}
			\rhoo{\phii{\iota\kappa}{\nu}\left(Z\right)}
			{\left(\mathcal{X}\right)}}
		{\phii{\iota\kappa}{\nu\iota\kappa}\left(W\right)}
		\\
		&= \lambdaa{\phii{\nu}{\nu}
			\rhoo{\phii{\iota\kappa}{\nu}\left(Z\right)}
			{\left(\mathcal{X}\right)}}
		{\phii{\iota\kappa}{\nu}\left(W\right)} 
		&\mbox{$\nu\iota\kappa = \nu$}\\
		&= \lambdaa{\rhoo{\phii{\iota\kappa}{\nu}\left(Z\right)}
			{\left(\mathcal{X}\right)}}
		{\phii{\iota\kappa}{\nu}\left(W\right)}.
		&\mbox{$\phii{\nu}{\nu} = \id_{\X{\nu}}$}
		\end{align*}
		Since
		\begin{align*}
		\phii{\iota\kappa}{\nu}&\left(Z\right)
		= \lambdaa{\phii{\iota\kappa}{\nu}\phii{\iota}{\iota\kappa}
			\left(y\right)}{\phii{\iota\kappa}{\nu}\phii{\kappa}{\iota\kappa}\left(z\right)}
		&\mbox{by \eqref{eq:phi-lambda}}\\
		&= \lambdaa{\phii{\iota}{\nu}
			\left(y\right)}{\phii{\kappa}{\nu}\left(z\right)}
		&\mbox{$\phii{\iota\kappa}{\nu}\phii{\iota}{\iota\kappa} = \phii{\iota}{\nu}$, 
			$\phii{\iota\kappa}{\nu}\phii{\kappa}{\iota\kappa} = \phii{\kappa}{\nu}$}\\
		&= \lambdaa{\mathcal{Y}}{\left(\mathcal{Z}\right)}
		\end{align*}
		and
		\begin{align*}
		\phii{\iota\kappa}{\nu}
		&\left(W\right)
		=\rhoo{\phii{\iota\kappa}{\nu}\phii{\kappa}{\iota\kappa}\left(z\right)}
		{\phii{\iota\kappa}{\nu}\phii{\iota}{\iota\kappa}\left(y\right)} 
		&\mbox{by \eqref{eq:phi-rho}}\\
		&=\rhoo{\phii{\kappa}{\nu}\left(z\right)}
		{\phii{\iota}{\nu}\left(y\right)} 
		&\mbox{$\phii{\iota\kappa}{\nu}\phii{\kappa}{\iota\kappa} =\phii{\kappa}{\nu}$,
			$\phii{\iota\kappa}{\nu}\phii{\iota}{\iota\kappa} =\phii{\iota}{\nu}$}\\
		&=\rhoo{\mathcal{Z}}
		{\left(\mathcal{Y}\right)} 	
		\end{align*}
		it follows that
		\begin{align*}
		C_{1} &= 
		\lambdaa{\rhoo{\lambdaa{\mathcal{Y}}{\left(\mathcal{Z}\right)}}{\left(\mathcal{X}\right)}}{\rhoo{\mathcal{Z}}
			{\left(\mathcal{Y}\right)}}.
		&\mbox{in $\X{\nu}$}
		\end{align*}
		Furthermore, since $U$ and $V$ lie in $\X{\omega\iota}$ we have
		\begin{align*}
		C_{2}
		&=\rhoo{\lambdaa{\rhoo{y}{\left(x\right)}}{\left(z\right)}}{\lambdaa{x}{\left(y\right)}}\\
		&=\rhoo{\lambdaa{\rhoo{\phii{\iota}{\omega\iota}\left(y\right)}{\phii{\omega}{\omega\iota}\left(x\right)}}{\left(z\right)}}{\lambdaa{\phii{\omega}{\omega\iota}\left(x\right)}{\phii{\iota}{\omega\iota}\left(y\right)}}\\
		&=
		\rhoo{\lambdaa{U}{\left(z\right)}}{\left(V\right)}
		\\
		&= \rhoo{\lambdaa{\phii{\omega\iota}{\nu}\left(U\right)}
			{\phii{\kappa}{\nu}\left(z\right)}}
		{\left(V\right)}
		&\mbox{$\nu = \omega\iota\kappa$}\\
		&= \rhoo{\phii{\nu}{\omega\iota\nu}
			\lambdaa{\phii{\omega\iota}{\nu}\left(U\right)}
			{\left(\mathcal{Z}\right)}}{
			\phii{\omega\iota}{\omega\iota\nu}\left(V\right)}\\
		&= \rhoo{\phii{\nu}{\nu}
			\lambdaa{\phii{\omega\iota}{\nu}\left(U\right)}
			{\left(\mathcal{Z}\right)}}{
			\phii{\omega\iota}{\nu}\left(V\right)}
		&\mbox{$\omega\iota\nu = \nu$}\\
		&= \rhoo{
			\lambdaa{\phii{\omega\iota}{\nu}\left(U\right)}
			{\left(\mathcal{Z}\right)}}{
			\phii{\omega\iota}{\nu}\left(V\right)}.
		&\mbox{$\phii{\nu}{\nu} = \id_{\X{\nu}}$}
		\end{align*}
		As seen before, $\phii{\omega\iota}{\nu}\left(U\right)
		= 
		\rhoo{\mathcal{Y}}
		{\left(\mathcal{X}\right)}$
		and
		$\phii{\omega\iota}{\nu}\left(V\right)
		=
		\lambdaa{\mathcal{X}}{\left(\mathcal{Y}\right)}$,
		thus 
		\begin{align*}
			C_{2} &= \rhoo{\lambdaa{\rhoo{\mathcal{Y}}
				{\left(\mathcal{X}\right)}}{\left(\mathcal{Z}\right)}}
			{\lambdaa{\mathcal{X}}{\left(\mathcal{Y}\right)}}.
			&\mbox{in $\X{\nu}$}
		\end{align*}
		Consequently, since $r_{\nu}$ is a solution, we obtain that $C_{1} = C_{2}$.\\
		Finally, since $W$ and $Z$ lie in $\X{\iota\kappa}$, note that
		\begin{align*}
		R_{2}
		&=\rhoo{\rhoo{y}{\left(z\right)}}{\rhoo{\lambdaa{z}{\left(y\right)}}{\left(x\right)}}\\ 
		&=\rhoo
		{\rhoo{\phii{\kappa}{\iota\kappa}\left(z\right)}
			{\phii{\iota}{\iota\kappa}\left(y\right)}}
		{\rhoo{\lambdaa{\phii{\iota}{\iota\kappa}\left(y\right)}
				{\phii{\kappa}{\iota\kappa}\left(z\right)}}
			{\left(x\right)}}\\
		&= \rhoo{W}{\rhoo{Z}{\left(x\right)}}\\
		&=
		\rhoo{W}
		{\rhoo{\phii{\iota\kappa}{\nu}
				\left(Z\right)}
			{\phii{\omega}{\nu}\left(x\right)}}
		&\mbox{$\nu = \omega\iota\kappa$}\\
		&= \rhoo
		{\phii{\iota\kappa}{\nu\iota\kappa}\left(W\right)}
		{\phii{\nu}{\nu\iota\kappa}
			\rhoo{\phii{\iota\kappa}{\nu}\left(Z\right)}
			{\left(\mathcal{X}\right)}}\\
		&= \rhoo
		{\phii{\iota\kappa}{\nu}\left(W\right)}
		{\phii{\nu}{\nu}
			\rhoo{\phii{\iota\kappa}{\nu}\left(Z\right)}
			{\left(\mathcal{X}\right)}}
		&\mbox{$\nu\iota\kappa = \nu$}\\
		&= \rhoo
		{\phii{\iota\kappa}{\nu}\left(W\right)}
		{\rhoo{\phii{\iota\kappa}{\nu}\left(Z\right)}
			{\left(\mathcal{X}\right)}}
		&\mbox{$\phii{\nu}{\nu} = \id_{\X{\nu}}$}
		\end{align*}
		As seen before, $\phii{\iota\kappa}{\nu}\left(W\right)
		= \rhoo{\mathcal{Z}}
		{\left(\mathcal{Y}\right)}$.
		In addition we have 
		\begin{align*}
		\phii{\iota\kappa}{\nu}
		&\left(Z\right)
		=\lambdaa{\phii{\iota\kappa}{\nu}\phii{\iota}{\iota\kappa}\left(y\right)}
		{\phii{\iota\kappa}{\nu}\phii{\kappa}{\iota\kappa}\left(z\right)}
		&\mbox{by \eqref{eq:phi-lambda}}\\
		&= \lambdaa{\phii{\iota}{\nu}\left(y\right)}
		{\phii{\kappa}{\nu}\left(z\right)}
		&\mbox{$\phii{\iota\kappa}{\nu}\phii{\iota}{\iota\kappa} = 			\phii{\iota}{\nu}$, 
			$\phii{\iota\kappa}{\nu}\phii{\kappa}{\iota\kappa} = \phii{\kappa}{\nu}$}\\
		&= \lambdaa{\mathcal{Y}}{\left(\mathcal{Z}\right)}.
		\end{align*}
		It follows that  
		\begin{align*}
		R_{2} &= \rhoo{\rhoo{\mathcal{Z}}
			{\left(\mathcal{Y}\right)}}
		{\rhoo{\lambdaa{\mathcal{Y}}
				{\left(\mathcal{Z}\right)}}
			{\left(\mathcal{X}\right)}}
		&\mbox{in $\X{\nu}$}\\
		&= \rhoo{\mathcal{Z}}{\rhoo{\mathcal{Y}}{\left(\mathcal{X}\right)}}.
		&\mbox{$r_{\nu}$ is a solution}
		\end{align*}
		Moreover, it holds 
		\begin{align*}
		R_{1} 
		&= \rhoo{z}{\rhoo{y}{\left(x\right)}}
		=\rhoo{z}{\rhoo{\phii{\iota}{\omega\iota}\left(y\right)}{\phii{\omega}{\omega\iota}\left(x\right)}}\\
		&=
		\rhoo{\phii{\kappa}{\nu}\left(z\right)}
		{\phii{\omega\iota}{\nu}\rhoo{\phii{\iota}{\omega\iota}\left(y\right)}{\phii{\omega}{\omega\iota}\left(x\right)}}\\
		&= \rhoo{\mathcal{Z}}
		{\rhoo{\phii{\omega\iota}{\nu}\phii{\iota}{\omega\iota}\left(y\right)}{\phii{\omega\iota}{\nu}\phii{\omega}{\omega\iota}\left(x\right)}}
		&\mbox{by \eqref{eq:phi-rho}}\\
		&= \rhoo{\mathcal{Z}}
		{\rhoo{\phii{\iota}{\nu}\left(y\right)}
			{\phii{\omega}{\nu}\left(x\right)}}
		&\mbox{$\phii{\omega\iota}{\nu}\phii{\iota}{\omega\iota} = \phii{\iota}{\nu}$,
			$\phii{\omega\iota}{\nu}\phii{\omega}{\omega\iota} = \phii{\omega}{\nu}$}\\
		&= \rhoo{\mathcal{Z}}
		{\rhoo{\mathcal{Y}}
			{\left(\mathcal{X}\right)}}, 
		\end{align*}
		hence $R_{1} = R_{2}$. Therefore, the map $r$ is a solution.
	\end{proof}
\medskip
 	
 Strong semilattices of solutions $\left(X_\alpha, r_{\alpha}\right)$ allows one to produce examples of solutions with finite order if solutions $r_{\alpha}$ are.   
	\begin{exs}
		$1.$ \ Let $X$ be a semilattice of sets indexed by $Y$ such that conditions $1.$ and $2.$ of \cref{th:sol-sss} are satisfied and let $r_{\alpha}$ be the twist map on $\X{\alpha}$, for every $\alpha\in Y$. Then, if  $\alpha\geq\beta$ we have that
		\begin{align*}
			&\phii{\alpha}{\beta}\lambdaa{x}{\left(y\right)} 
			= \phii{\alpha}{\beta}\left(y\right) 
			= \lambdaa{\phii{\alpha}{\beta}{\left(x\right)}}{\phii{\alpha}{\beta}\left(y\right)}
			\\
			&\phii{\alpha}{\beta}\rhoo{y}{\left(x\right)} 
			= \phii{\alpha}{\beta}\left(x\right)
			= \rhoo{\phii{\alpha}{\beta}\left(y\right)}{\phii{\alpha}{\beta}\left(x\right)},
		\end{align*}
		for all $x, y\in X_\alpha$, i.e.,  condition $3.$ of \cref{th:sol-sss} holds. 
		Hence, the strong semilattice of solutions $\left(X, r\right)$ is such that $r^{3} = r$. 
		Indeed, if $x\in X_\alpha$ and $y\in X_\beta$, assuming $\nu:=\alpha\beta$ we have that 
		$r\left(x, y\right) 
		= r_{\nu}\left(\phii{\alpha}{\nu}\left(x\right),
		\phii{\beta}{\nu}\left(y\right)\right)$, hence 
		$r^{2}\left(x, y\right)
		= \left(\phii{\alpha}{\nu}\left(x\right), \phii{\beta}{\nu}\left(y\right)\right)$
		and so 
		\begin{align*}
		r^{3}\left(x, y\right)
		= r_{\nu}\left(\phii{\alpha}{\nu}\left(x\right), \phii{\beta}{\nu}\left(y\right)\right)
		=
		r\left(x, y\right).
		\end{align*}
		Consequently, $r^{3} = r$.
		
		$2.$ \  Let $X:= \X{\alpha}\cup \X{\beta}$ be a semilattice of sets
		such that $\alpha > \beta$, $c$ a fixed element of $X_{\beta}$,
		and $\phii{\alpha}{\beta}\left(x\right) = c$, for every $x\in X_{\alpha}$.
		Let $r_{\alpha}$ be the twist map on $\X{\alpha}$ and $r_{\beta}$ the idempotent solution on $\X{\beta}$ defined by $r_{\beta}\left(x, y\right):= \left(x, c\right)$, for all $x,y\in X_\beta$.  
		Then, if $x,y\in X_\alpha$, we have that 
		\begin{align*}
		\phii{\alpha}{\beta}\lambdaa{x}{\left(y\right)} 
		&= c
		= \phii{\alpha}{\beta}\left(x\right) =\lambdaa{\phii{\alpha}{\beta}{\left(x\right)}}{\phii{\alpha}{\beta}\left(y\right)}
		\\
		\phii{\alpha}{\beta}\rhoo{y}{\left(x\right)}
		&= c
		=   \rhoo{\phii{\alpha}{\beta}\left(y\right)}{\phii{\alpha}{\beta}\left(x\right)},
		\end{align*}
		hence the assumptions of \cref{th:sol-sss} are satisfied.
		Moreover, the strong semilattice of solutions $\left(X, r\right)$ is such that $r^{3} = r$.
		Indeed, if $x\in X_{\alpha}$ and  $y\in X_{\beta}$, since 
		$r\left(x, y\right) 
		= r_{\beta}\left(\phii{\alpha}{\beta}\left(x\right),
		y\right)
		= r_{\beta}\left(c, y\right) = \left(c, c\right)$, 
		it follows that 
		\begin{align*}
			r^{2}\left(x, y\right)
			= r_{\beta}\left(c, c\right) 
			= \left(c, c\right) 
			= r\left(x, y\right).
		\end{align*}
		Therefore, we obtain that $r^{3} = r$.
		
		$3.$ \ Let $X:= \X{\alpha}\cup \X{\beta}$ be a semilattice of sets 
		such that $\alpha > \beta$, $c$ a fixed element of $X_{\beta}$, and $\phii{\alpha}{\beta}\left(x\right) = c$, for every $x\in X_{\alpha}$.
		Let $f$ be an idempotent map from $\X{\beta}$ into itself, $f\neq \id_{\X{\beta}}$, and 
		$r_{\alpha}$ the map from $\X{\alpha}\times \X{\alpha}$ into itself defined by $r_{\alpha}\left(x, y\right):= \left(f\left(x\right), x\right)$, for all $x, y\in X_{\alpha}$. 
		Thus, $r_{\alpha}$ is a solution such that $r_{\alpha}^{3} = r_{\alpha}^{2}$.
		Let $r_{\beta}$ be the idempotent solution defined by $r_{\beta}\left(x, y\right) = \left(x, c\right)$, for all $x, y\in X_\beta$. 
		Then, if $x,y\in X_\alpha$, we obtain that
		\begin{align*}
		\phii{\alpha}{\beta}\lambdaa{x}{\left(y\right)} 
		&= c
		= \phii{\alpha}{\beta}{\left(x\right)} = \lambdaa{\phii{\alpha}{\beta}{\left(x\right)}}{\phii{\alpha}{\beta}\left(y\right)}
		\\
		\phii{\alpha}{\beta}\rhoo{y}{\left(x\right)} 
		&= c
		=   \rhoo{\phii{\alpha}{\beta}\left(y\right)}{\phii{\alpha}{\beta}\left(x\right)},
		\end{align*}
		thus the hypotheses of \cref*{th:sol-sss} are satisfied.
		Moreover, the strong semilattice of solutions $\left(X, r\right)$ is such that $r^{3} = r^{2}$. 
		Indeed, if $x\in X_\alpha$ and $y\in X_\beta$, since
		$r\left(x, y\right) 
		= r_{\beta}\left(\phii{\alpha}{\beta}\left(x\right),
		y\right)
		= r_{\beta}\left(c, y\right) = \left(c,c\right)$, 
		it follows that 
		\begin{align*}
			r^{2}\left(x, y\right)
			= r_{\beta}\left(c, c\right)
			= \left(c, c\right)
			= r\left(x, y\right)
		\end{align*}
		and clearly $r^{3}\left(x, y\right) = r^{2}\left(x, y\right)$.
		Therefore, $r^{3} = r^{2}$. 
   \end{exs}
   \medskip
 	
 	To investigate strong semilattices of solutions with finite order, we need the notions of the index and the period of a solution $r$ that are 
 	\begin{align*}
 	\indd{\left(r\right)}
 	&:=\min\left\{\left.j \,\right|\, j\in\mathbb{N}_0, \, \exists \, l\in \mathbb{N}\ r^j = r^l , \ j\neq l\right\}\\
 	\perr{\left(r\right)}
 	&:=\min\left\{\left.k\,\right| \, k\in\mathbb{N}, \, r^{\indd{\left(r\right)}+k} = r^{\indd{\left(r\right)}}\right\},
 	\end{align*}
 	respectively. 
    These definitions of the index and the order are slightly different from the classical ones (cf. \cite[p. 10]{Ho95book}), but they are functional to distinguish bijective solutions from non-bijective ones. For more details, we refer the reader to \cite{CCoSt20-2}.
 	
 	In the following theorem we show that, given a semilattice $Y$ of finite cardinality, the strong semilattice of solutions $\left(X, r\right)$ indexed by $Y$ is of finite order if and only if solutions $r_{\alpha}$ are.
 	Furthermore, it allows for establishing the order of the strong semilattice of solutions $\left(X, r\right)$ if the index and the period of solutions $r_{\alpha}$ are known. 
 	Conversely, the index and the period of a strong semilattice of solutions $\left(X, r\right)$ give us upper bounds of the indexes ad periods of solutions $r_{\alpha}$.
	\begin{theor}\label{th:sol-sss-finite}
		Let $\left(X, r\right)$ be a strong semilattice of solutions indexed by a finite semilattice $Y$. 
		Then, $r_{\alpha}$ is a solution with finite order on $\X{\alpha}$, for every $\alpha\in Y$, if and only if $r$ is a solution with finite order.
		More precisely, the index of $r$ is
\begin{align*}
    \indd{\left(r\right)}
    = \max\left\lbrace  \ 1, \  \indd{\left(r_{\alpha}\right)} \ | \ \alpha\in Y \ \right\rbrace
\end{align*}
		and the period is 
		\begin{align*}
			\perr{\left(r\right)} = \lcm\left\lbrace \perr{\left(r_{\alpha}\right)} \ | \  \alpha\in Y\right\rbrace.
		\end{align*}
	\begin{proof}
	At first suppose that $r_{\alpha}$ is a solution with finite order, for every $\alpha\in Y$.  
	Let $n:= \lcm\left\lbrace \perr{\left(r_{\alpha}\right)} \ | \ \alpha\in Y \right\rbrace$ and $i:= \max\left\lbrace \indd{\left(r_{\alpha}\right)} \ | \ \alpha\in Y \right\rbrace$.
   	If $x\in X_\alpha$ and $y\in X_\beta$, setting $\nu:= \alpha\beta$ we have that 
   	\begin{align*}
   		r^{n + i}\left(x, y\right)
   		= r^{n+i}_{\nu}\left(\phii{\alpha}{\nu}\left(x\right),
   			\phii{\beta}{\nu}\left(y\right)\right)
   		= r^{i}_{\nu}\left(\phii{\alpha}{\nu}\left(x\right),
   		    \phii{\beta}{\nu}\left(y\right)\right).
   \end{align*} 
   Consequently, if $i = 0$, i.e., $r_{\alpha}$ is bijective for every $\alpha\in Y$, we obtain that 
   \begin{align*}
   r^{n + 1}\left(x, y\right)
   = r_{\nu}\left(\phii{\alpha}{\nu}\left(x\right),
   		\phii{\beta}{\nu}\left(y\right)\right)
   = r\left(x, y\right).
   \end{align*}
   Therefore, $r^{n+1} = r$ and clearly the index of $r$ is $1$ by the assumption on $i$. 
   Now, assume that $i\neq 0$
   and that $i = \indd{\left(r_{\gamma}\right)}$, for a certain $\gamma\in Y$.
   If $r^{n} = r^{h}$, for a positive integer $h$, 
   in particular we have that $r_{\gamma}^{n} = r_{\gamma}^{h}$.
   Since $n = \perr{\left(r_{\gamma}\right)}q + i$ for a certain natural number $q$, it follows that 
   \begin{align*}
   r_{\gamma}^{ \ \perr{\left(r_{\gamma}\right)} + i} 
   = r_{\gamma}^{i} 
   = r_{\gamma}^{ \ \perr{\left(r_{\gamma}\right)}q + i}
   = r_{\gamma}^{n} 
   = r_{\gamma}^{h},
   \end{align*}
   hence $i\leq h$ and so $\indd{\left(r\right)} = i$. \\
  	Now, we proceed to determine the period of $r$.
  	If $r^{m} = r^{\indd{\left(r\right)}}$ for a natural number $m$, 
    then $r_{\alpha}^{m} = r_{\alpha}^{\indd{\left(r\right)}}$, for every $\alpha\in Y$. Consequently, $\perr{\left(r_{\alpha}\right)}$ divides $m - \indd{\left(r\right)}$, for every $\alpha \in Y$, thus $\lcm\left\lbrace \perr{\left(r_{\alpha}\right)} \ | \ \alpha\in Y\right\rbrace$ divides $m - \indd{\left(r\right)}$, i.e., $n - \indd{\left(r\right)}$ divides $m - \indd{\left(r\right)}$. 
    Therefore, $n - \indd{\left(r\right)}\leq m - \indd{\left(r\right)}$ and hence $\perr{\left(r\right)} = n - \indd{\left(r\right)}$.
	Conversely, suppose that the solution $r$ is with finite order, set $i:= \indd{\left(r\right)}$, and $p:= \perr{\left(r\right)}$. If $\alpha\in Y$,  since $\phii{\alpha}{\alpha} = \id_{\X{\alpha}\times \X{\alpha}}$ we obtain that $r_{\alpha}^{p + i} = r_{\alpha}^{i}$. Therefore, $r_{\alpha}$ is a solution with finite order. 
	Clearly, we have that $\indd{\left(r_{\alpha}\right)}$ is less than $i$ and $\perr{\left(r_{\alpha}\right)}$ divides $p$.
 	\end{proof}
\end{theor}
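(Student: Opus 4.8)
The plan is to reduce every statement about powers of $r$ to the behaviour of the block maps $r_\nu$ on $\X{\nu}\times\X{\nu}$, via one structural observation. First I would record that $r$ restricts to $r_\nu$ on each block: if $u,v\in\X{\nu}$ then $\nu\nu=\nu$ and $\phii{\nu}{\nu}=\id$, so the defining formula from \cref{th:sol-sss} gives $r(u,v)=r_{\nu\nu}(\phii{\nu}{\nu}(u),\phii{\nu}{\nu}(v))=r_\nu(u,v)$, and $r_\nu$ maps $\X{\nu}\times\X{\nu}$ into itself. The engine of the proof is then the lemma, proved by induction on $m\ge 1$, that for $x\in\X{\alpha}$, $y\in\X{\beta}$ and $\nu:=\alpha\beta$,
\[
r^m(x,y)=r_\nu^{\,m}\bigl(\phii{\alpha}{\nu}(x),\phii{\beta}{\nu}(y)\bigr).
\]
The base case $m=1$ is the definition of $r$, and the inductive step uses that the right-hand side already lies in $\X{\nu}\times\X{\nu}$ together with the restriction identity. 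Specialising to $\alpha=\beta=\gamma$ (so $\nu=\gamma$ and the $\phi$'s are identities) yields the projection identity $r^m|_{\X{\gamma}\times\X{\gamma}}=r_\gamma^{\,m}$ for $m\ge 1$, which I will use repeatedly for lower bounds.

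For the implication that finite order of all $r_\alpha$ forces finite order of $r$, I would set $n:=\lcm\{\perr{(r_\alpha)}\mid\alpha\in Y\}$ and $i:=\max\{\indd{(r_\alpha)}\mid\alpha\in Y\}$, both finite since $Y$ is finite. Using the lemma with $\indd{(r_\nu)}\le i$ and $\perr{(r_\nu)}\mid n$ for every $\nu$, one gets $r_\nu^{\,n+i}=r_\nu^{\,i}$, hence $r^{\,n+i}=r^{\,i}$ whenever $i\ge 1$; when $i=0$ every $r_\alpha$ is bijective, so $r_\nu^{\,n}=\id$ and the lemma gives $r^{\,n+1}=r$. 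This proves $r$ has finite order and furnishes the upper bounds $\indd{(r)}\le\max\{1,i\}$ and $\perr{(r)}\le n$. For the matching lower bound on the index I would use the projection identity: writing $j:=\indd{(r)}$ with a witness $r^{\,j}=r^{\,l}$, $l\ne j$, restriction to $\X{\gamma}\times\X{\gamma}$, where $i=\indd{(r_\gamma)}$, forces $r_\gamma^{\,j}=r_\gamma^{\,l}$, so minimality of $\indd{(r_\gamma)}$ gives $j\ge i$; the case $j=0$ is excluded because it would make $r_\gamma$ bijective, contradicting $i\ge 1$. Finally, the image of $r$ lies in $\bigcup_\nu\X{\nu}\times\X{\nu}$, so as soon as $Y$ has two distinct elements $r$ misses every mixed pair, is not surjective, and $\indd{(r)}\ge 1$; combining the bounds yields $\indd{(r)}=\max\{1,i\}$ in both regimes.

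To pin down the period I would again push everything onto the blocks. With $i=\indd{(r)}$, if $r^{\,m}=r^{\,i}$ then restriction gives $r_\alpha^{\,m}=r_\alpha^{\,i}$ for every $\alpha$, and since $m,i\ge\indd{(r_\alpha)}$ the standard index--period arithmetic (two powers of a finite-order map agree iff their exponents are congruent modulo the period, once both exceed the index) gives $\perr{(r_\alpha)}\mid(m-i)$, whence $n=\lcm\{\perr{(r_\alpha)}\}$ divides $m-i$; together with $r^{\,i+n}=r^{\,i}$ this forces $\perr{(r)}=n$. The converse implication is immediate from the projection identity: if $r^{\,p+i}=r^{\,i}$ with $i=\indd{(r)}$, $p=\perr{(r)}$, restricting to $\X{\alpha}\times\X{\alpha}$ gives $r_\alpha^{\,p+i}=r_\alpha^{\,i}$, so each $r_\alpha$ has finite order with $\indd{(r_\alpha)}\le i$ and $\perr{(r_\alpha)}\mid p$.

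The main obstacle I anticipate is bookkeeping around the index rather than any deep idea: making the lemma's induction interact cleanly with the two regimes $i=0$ (all $r_\alpha$ bijective, where $\indd{(r)}$ jumps to $1$ precisely because $r$ is forced to be non-surjective) and $i\ge 1$, and making the lower bound $\indd{(r)}\ge i$ rigorous through the projection identity and the minimality built into the definition of the index. The period computation rests only on the elementary index--period arithmetic for a single transformation, which I would invoke as a known fact.
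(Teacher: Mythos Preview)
Your proposal is correct and follows essentially the same route as the paper: both arguments rest on the identity $r^{m}(x,y)=r_{\nu}^{\,m}\bigl(\phii{\alpha}{\nu}(x),\phii{\beta}{\nu}(y)\bigr)$ for $\nu=\alpha\beta$, which the paper uses implicitly and you isolate as a lemma proved by induction, and both then read off the index and period by restricting to the diagonal blocks $\X{\gamma}\times\X{\gamma}$. The one genuine addition in your write-up is the non-surjectivity argument (when $|Y|\ge 2$, mixed pairs are never hit, forcing $\indd{(r)}\ge 1$), which supplies the justification behind the paper's ``clearly the index of $r$ is $1$'' in the bijective case; apart from this extra care the two proofs coincide.
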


Let us note that if $\left(X, r\right)$ is a strong semilattice of non-bijective solutions $r_\alpha$ such that $\indd{\left(r_\alpha\right)} = i$ and $\perr{\left(r_\alpha\right)} = n$, for every $\alpha\in Y$, then $r$ is still a solution of index $i$ and period $n$, also in the case of an infinite semilattice $Y$. Indeed, one can prove this statement by similar computations used for the non-bijective case in the proof of \cref{th:sol-sss-finite}.

\bibliographystyle{elsart-num-sort}  
\bibliography{bibliography}


\end{document}